\newcommand{\rmc}{\mathrm{c}}
\newcommand{\rmu}{\mathrm{u}}
\begin{document}
\date{February 13, 2012}
%
%
\title{Strongly Polynomial Primal-Dual Algorithms for\\ Concave Cost
  Combinatorial Optimization Problems\footnote{This research is based on the
    second author's Ph.D. thesis at the Massachusetts Institute of
    Technology \cite{n:Stratila:2008:COP}.}}
\author{Thomas L. Magnanti\footnote{School of Engineering and Sloan School
    of Management, Massachusetts Institute of Technology, 77 Massachusetts
    Avenue, Room 32-D784, Cambridge, MA 02139. E-mail: {\tt
      magnanti@mit.edu.}}
\and Dan Stratila\footnote{Rutgers Center for Operations Research and
  Rutgers Business School, Rutgers University, 640 Bartholomew Road, Room
  107, Piscataway, NJ 08854. E-mail: {\tt dstrat@rci.rutgers.edu.}}}
\begin{singlespace}
\maketitle
\end{singlespace}
{\abstract{
\begin{singlespace}
We introduce an algorithm design technique for a class of combinatorial
optimization problems with concave costs. This technique yields a strongly
polynomial primal-dual algorithm for a concave cost problem whenever such an
algorithm exists for the fixed-charge counterpart of the problem. For many
practical concave cost problems, the fixed-charge counterpart is a
well-studied combinatorial optimization problem. Our technique preserves
constant factor approximation ratios, as well as ratios that depend only on
certain problem parameters, and exact algorithms yield exact algorithms.

Using our technique, we obtain a new 1.61-approximation algorithm for the
concave cost facility location problem. For inventory problems, we obtain a
new exact algorithm for the economic lot-sizing problem with general concave
ordering costs, and a 4-approximation algorithm for the joint replenishment
problem with general concave individual ordering costs.
\end{singlespace}
}}
%
%
%
\section{Introduction}
\label{sect:intro}
We introduce a general technique for designing strongly polynomial
primal-dual algorithms for a class of combinatorial optimization problems
with concave costs. We apply the technique to study three such problems: the
concave cost facility location problem, the economic lot-sizing problem with
general concave ordering costs, and the joint replenishment problem with
general concave individual ordering costs.

In the second author's Ph.D. thesis \cite{n:Stratila:2008:COP} (see also
\cite{n:Magnanti:2012:SCO}), we developed a general approach for
approximating an optimization problem with a separable concave objective by
an optimization problem with a piecewise-linear objective and the same
feasible set. When we are minimizing a nonnegative cost function over a
polyhedron, and would like the resulting problem to provide a $1+\epsilon$
approximation to the original problem in optimal cost, the size of the
resulting problem is polynomial in the size of the original problem and
linear in $1/\epsilon$. This bound implies that a variety of polynomial-time
exact algorithms, approximation algorithms, and polynomial-time heuristics
for combinatorial optimization problems immediately yield fully
polynomial-time approximation schemes, approximation algorithms, and
polynomial-time heuristics for the corresponding concave cost problems.

However, the piecewise-linear approach developed in
\cite{n:Stratila:2008:COP} cannot fully address several difficulties
involving concave cost combinatorial optimization problems. First, the
approach adds a relative error of $1+\epsilon$ in optimal cost. For example,
using the approach together with an exact algorithm for the classical
lot-sizing problem, we can obtain a fully polynomial-time approximation
scheme for the lot-sizing problem with general concave ordering costs.
However, there are exact algorithms for lot-sizing with general concave
ordering costs \cite[e.g.][]{Wagner:1960:PDP,MR1222625}, making fully
polynomial-time approximation schemes of limited interest.

Second, suppose that we are computing near-optimal solutions to a concave
cost problem by performing a $1+\epsilon$ piecewise-linear approximation,
and then using a heuristic for the resulting combinatorial optimization
problem. We are facing a trade-off between choosing a larger value of
$\epsilon$ and introducing an additional approximation error, or choosing a
smaller value of $\epsilon$ and having to solve larger combinatorial
optimization problems. For example, in \cite{n:Stratila:2008:COP}, we
computed near-optimal solutions to large-scale concave cost multicommodity
flow problems by performing piecewise-linear approximations with
$\epsilon=1\%$, and then solving the resulting fixed-charge multicommodity
flow problems with a primal-dual heuristic. The primal-dual heuristic itself
yielded an average approximation guarantee of $3.24\%$. Since we chose
$\epsilon=1\%$, the overall approximation guarantee averaged $4.27\%$. If we
were to choose $\epsilon=0.1\%$ in an effort to lower the overall guarantee,
the size of the resulting problems would increase by approximately a factor
of $10$.

Third, in some cases, after we approximate the concave cost problem by a
piecewise-linear problem, the resulting problem does not reduce polynomially
to the corresponding combinatorial optimization problem. As a result, the
piecewise-linear approach in \cite{n:Stratila:2008:COP} cannot obtain fully
polynomial-time approximation schemes, approximation algorithms, and
polynomial-time heuristics for the concave cost problem. For example, when
we carry out a piecewise-linear approximation of the joint replenishment
problem with general concave individual ordering costs, the resulting joint
replenishment problem with piecewise-linear individual ordering costs can be
reduced only to an exponentially-sized classical joint replenishment
problem.

These difficulties are inherent in any piecewise-linear approximation
approach, and cannot be addressed fully without making use of the problem
structure.

The technique developed in this paper yields a strongly polynomial
primal-dual algorithm for a concave cost problem whenever such an algorithm
exists for the corresponding combinatorial optimization problem. The
resulting algorithm runs directly on the concave cost problem, yet can be
viewed as the original algorithm running on an exponentially or
infinitely-sized combinatorial optimization problem. Therefore, exact
algorithms yield exact algorithms, and constant factor approximation ratios
are preserved. Since the execution of the resulting algorithm mirrors that
of the original algorithm, we can also expect the aposteriori approximation
guarantees of heuristics to be similar in many cases.
\subsection{Literature Review}
\label{sect:intro:lit-rev}
\subsubsection{Concave Cost Facility Location}
\label{sect:intro:loc}
In the \emph{classical facility location} problem, there are $m$ customers
and $n$ facilities. Each customer $i$ has a demand $d_i > 0$, and needs to
be connected to an open facility to satisfy this demand. Connecting a
customer $i$ to a facility $j$ incurs a connection cost $c_{ij} d_i$; we
assume that the connection costs are nonnegative and satisfy the metric
inequality. Each facility $j$ has an associated opening cost $f_j\in
\bbR_+$. Let $x_{ij} = 1$ if customer $i$ is connected to facility $j$, and
$x_{ij} = 0$ otherwise. Also let $y_j = 1$ if facility $j$ is open, and $y_j
= 0$ otherwise. Then the total cost is $\sum_{j=1}^n f_j y_j + \sum_{i=1}^m
\sum_{j=1}^n c_{ij} d_i x_{ij}$. The goal is to assign each customer to one
facility, while minimizing the total cost.

The classical facility location problem is one of the fundamental problems
in operations research \cite{MR1066259,MR2000c:90001}. The reference book
edited by Mirchandani and Francis \cite{MR1066256} introduces and reviews
the literature for a number of location problems, including classical
facility location. Since in this paper, our main contributions to facility
location problems are in the area of approximation algorithms, we next
provide a brief survey of previous approximation algorithms for classical
facility location.

Hochbaum \cite{MR643582} showed that the greedy algorithm provides a $O(\log
n)$ approximation for this problem, even when the connection costs $c_{ij}$
are not metric. Shmoys et al. \cite{Shmoys:1997:AAF} gave the first
constant-factor approximation algorithm, with a guarantee of 3.16. More
recently, Jain et al. introduced primal-dual 1.861 and 1.61-approximation
algorithms \cite{MR2146253}. Sviridenko \cite{MR2061058} obtained a
1.582-approximation algorithm based on LP rounding. Mahdian et al
\cite{MR2247734} developed a 1.52-approximation algorithm that combines a
primal-dual stage with a scaling stage. Currently, the best known ratio is
1.4991, achieved by an algorithm that employs a combination of LP rounding
and primal-dual techniques, due to Byrka \cite{Byrka:2007:OBA}.

Concerning complexity of approximation, the more general problem where the
connection costs need not be metric has the set cover problem as a special
case, and therefore is not approximable to within a certain logarithmic
factor unless $\mathrm{P} = \mathrm{NP}$ \cite{MR1715654}. The problem with
metric costs does not have a polynomial-time approximation scheme unless
$\mathrm{P} = \mathrm{NP}$, and is not approximable to within a factor of
1.463 unless $\mathrm{NP} \subseteq \mathrm{DTIME}\left(\eta^{O(\log \log
  \eta)}\right)$ \cite{MR1682440}.

A central feature of location models is the economies of scale that can be
achieved by connecting multiple customers to the same facility. The
classical facility location problem models this effect by including a fixed
charge $f_j$ for opening each facility $j$. As one of the simplest forms of
concave functions, fixed charge costs enable the model to capture the
trade-off between opening many facilities in order to decrease the
connection costs and opening few facilities to decrease the facility costs.
The \emph{concave cost facility location} problem generalizes this model by
assigning to each facility $j$ a nondecreasing concave cost function $\phi_j
: \bbR_+ \to \bbR_+$, capturing a wider variety of phenomena than is
possible with fixed charges. We assume without loss of generality that
$\phi_j(0) = 0$ for all $j$. The cost at facility $j$ is a function of the
total demand at $j$, that is $\phi_j\bigl(\sum_{i=1}^m d_i x_{ij} \bigr)$,
and the total cost is $\sum_{j=1}^n \phi_j\bigl(\sum_{i=1}^m d_i x_{ij}
\bigr) + \sum_{i=1}^m \sum_{j=1}^n c_{ij} d_i x_{ij}$.

Researchers have studied the concave cost facility location problem since at
least the 1960's \cite{Kuehn:1963:HPL,Feldman:1966:WLC}. Since it contains
classical facility location as a special case, the previously mentioned
complexity results hold for this problem---the more general non-metric
problem cannot be approximated to within a certain logarithmic factor unless
$\mathrm{P} = \mathrm{NP}$, and the metric problem cannot be approximated to
within a factor of 1.463 unless $\mathrm{NP} \subseteq
\mathrm{DTIME}\left(\eta^{O(\log \log \eta)}\right)$.

To the best of our knowledge, previously the only constant factor
approximation algorithm for concave cost facility location was obtained by
Mahdian and Pal \cite{MR2085471}, who developed a $3+\epsilon$ approximation
algorithm based on local search.

When the concave cost facility location problem has uniform demands, that is
$d_1=d_2=\dots=d_m$, a wider variety of results become available. Hajiaghayi
et al. \cite{MR1991628} obtained a 1.861-approximation algorithm. A number
of results become available due to the fact that concave cost facility
location with uniform demands can be reduced polynomially to classical
facility location. For example, Hajiaghayi et al. \cite{MR1991628} and
Mahdian et al. \cite{MR2247734} described a 1.52-approximation algorithm.

In the second author's Ph.D. thesis \cite{n:Stratila:2008:COP} (see also
\cite{n:Magnanti:2012:SCO}), we obtain a $1.4991 + \epsilon$ approximation
algorithm for concave cost facility location by using piecewise-linear
approximation. The running time of this algorithm depends polynomially on
$1/\epsilon$; when $\epsilon$ is fixed, the running time is not strongly
polynomial.

Independently, Romeijn et al. \cite{MR2598822} developed strongly polynomial
1.61 and 1.52-approximation algorithms for this problem, each with a running
time of $O(n^4\log n)$. Here $n$ is the higher of the number of customers
and the number of facilities. They consider the algorithms for classical
facility location from \cite{MR2146253,MR2247734} through a greedy
perspective. Since this paper uses a primal-dual perspective, establishing a
connection between the research of Romeijn et al. and ours is an interesting
question.
\subsubsection{Concave Cost Lot-Sizing}
\label{sect:intro:lots}
In the \emph{classical lot-sizing} problem, we have $n$ discrete time
periods, and a single item (sometimes referred to as a product, or
commodity). In each time period $t=1,\dots,n$, there is a demand $d_t\in
\bbR_+$ for the product, and this demand must be supplied from product
ordered at time $t$, or from product ordered at a time $s<t$ and held until
time $t$. In the inventory literature this requirement is known as no
backlogging and no lost sales. The cost of placing an order at time $t$
consists of a fixed cost $f_t\in \bbR_+$ and a per-unit cost $c_t\in
\bbR_+$: ordering $\xi_t$ units costs $f_t + c_t \xi_t$. Holding inventory
from time $t$ to time $t+1$ involves a per-unit holding cost $h_t \in
\bbR_+$: holding $\xi_t$ units costs $h_t \xi_t$. The goal is to satisfy all
demand, while minimizing the total ordering and holding cost.

The classical lot-sizing problem is one of the basic problems in inventory
management and was introduced by Manne \cite{Manne:1958:PEL}, and Wagner and
Whitin \cite{MR0102442}. The literature on lot-sizing is extensive and here
we provide only a brief survey of algorithmic results; for a broader
overview, the reader may refer to the book by Pochet and Wolsey
\cite{MR2219955}. Wagner and Whitin \cite{MR0102442} provided a $O(n^2)$
algorithm under the assumption that $c_t \le c_{t-1} + h_{t-1}$; this
assumption is also known as the Wagner-Whitin condition, or the
non-speculative condition. Zabel \cite{Zabel:1964:SGI}, and Eppen et al
\cite{Eppen:1969:EPH} obtained $O(n^2)$ algorithms for the general case.
Federgruen and Tzur \cite{Federgruen:1991:SFA}, Wagelmans et al.
\cite{MR1152747}, and Aggarwal and Park \cite{MR1222625} independently
obtained $O(n\log n)$ algorithms for the general case.

Krarup and Bilde \cite{MR0525912} showed that integer programming
formulation used in Section \ref{sect:lots} is integral. Levi et al.
\cite{MR2233997} also showed that this formulation is integral, and gave a
primal-dual algorithm to compute an optimal solution. (They do not evaluate
the running time of their algorithm.)

The \emph{concave cost lot-sizing} problem generalizes classical lot-sizing
by replacing the fixed and per-unit ordering costs $f_t$ and $c_t$ with
nondecreasing concave cost functions $\phi_t : \bbR_+ \to \bbR_+$. The cost
of ordering $\xi_t$ units at time $t$ is now $\phi_t(\xi_t)$. We assume
without loss of generality that $\phi_t(0) = 0$ for all $t$. This problem
has also been studied since at least the 1960's. Wagner
\cite{Wagner:1960:PDP} obtained an exact algorithm for this problem.
Aggarwal and Park \cite{MR1222625} obtain another exact algorithm with a
running time of $O(n^2)$.
\subsubsection{Concave Cost Joint Replenishment}
\label{sect:intro:jrp}
In the \emph{classical joint replenishment} problem (JRP), we have $n$
discrete time periods, and $K$ items (which may also be referred to as
products, or commodities). For each item $k$, the set-up is similar to the
classical lot-sizing problem. There is a demand $d^k_t\in \bbR_+$ of item
$k$ in time period $t$, and the demand must be satisfied from an order at
time $t$, or from inventory held from orders at times before $t$. There is a
per-unit cost $h^k_t \in \bbR_+$ for holding a unit of item $k$ from time
$t$ to $t+1$. For each order of item $k$ at time $t$, we incur a fixed cost
$f^k \in \bbR_+$. Distinguishing the classical JRP from $K$ separate
classical lot-sizing problems is the fixed joint ordering cost---for each
order at time $t$, we pay a fixed cost of $f^0\in \bbR_+$, independent of
the number of items or units ordered at time $t$. Note that $f^0$ and $f^k$
do not depend on the time $t$. The goal is to satisfy all demand, while
minimizing the total ordering and holding cost.

The classical JRP is a basic model in inventory theory
\cite{n:Joneja:1987:MEJ,Askoy:1988:MII}. The problem is NP-hard
\cite{MR995961}. When the number of items or number of time periods is
fixed, the problem can be solved in polynomial time
\cite{Zangwill:1966:DMP,MR0246626}. Federgruen and Tzur \cite{MR1307806}
developed a heuristic that computes $1+\epsilon$ approximate solutions
provided certain input parameters are bounded. Shen et al
\cite{n:Shen:X:AAS} obtained a $O(\log n + \log K)$ approximation algorithm
for the one-warehouse multi-retailer problem, which has the classical JRP as
a special case. Levi et al. \cite{MR2233997} provided the first constant
factor approximation algorithm for the classical JRP, a 2-approximation
primal-dual algorithm. Levi et al. \cite{Levi:2005:CAA} obtained a
2.398-approximation algorithm for the one-warehouse multi-retailer problem.
Levi and Sviridenko \cite{MR2305009} improved the approximation guarantee
for the one-warehouse multi-retailer problem to 1.8.

The \emph{concave cost joint replenishment} problem generalizes the
classical JRP by replacing the fixed individual ordering costs $f^k$ by
nondecreasing concave cost functions $\phi^k : \bbR_+ \to \bbR_+$. We assume
without loss of generality that $\phi^k(0) = 0$ for all $k$. The methods
employed by Zangwill \cite{Zangwill:1966:DMP} and Veinott \cite{MR0246626}
for the classical JRP with a fixed number of items or fixed number of time
periods can also be employed on the concave cost JRP. We are not aware of
results for the concave cost JRP that go beyond those available for the
classical JRP. Since prior to the work of Levi et al. \cite{MR2233997}, a
constant factor approximation algorithm for the classical JRP was not known,
we conclude that no constant factor approximation algorithms are known for
the concave cost JRP.
\subsection{Our Contribution}
\label{sect:intro:our-contr}
In Section \ref{sect:loc}, we develop our algorithm design technique for the
concave cost facility location problem. In Section \ref{sect:loc:techn}, we
describe preliminary concepts. In Sections \ref{sect:loc:single-fac} and
\ref{sect:loc:other-rules}, we obtain the key technical insights on which
our approach is based. In Section \ref{sect:loc:multi-fac}, we obtain a
strongly polynomial 1.61-approximation algorithm for concave cost facility
location with a running time of $O(m^3n + mn\log n)$. We can also obtain a
strongly polynomial 1.861-approximation algorithm with a running time of
$O(m^2n + mn\log n)$. Here $m$ denotes the number of customers and $n$ the
number of facilities.

In Section \ref{sect:lots}, we apply our technique to the concave cost
lot-sizing problem. We first adapt the algorithm of Levi et al.
\cite{MR2233997} to work for the classical lot-sizing problem as defined in
this paper. Levi et al. derive their algorithm in a slightly different
setting that is neither a generalization nor a special case of the setting
in this paper. In Section \ref{sect:lots:techn}, we obtain a strongly
polynomial exact algorithm for concave cost lot-sizing with a running time
of $O(n^2)$. Here $n$ is the number of time periods. While the running time
matches that of the fastest previous algorithm \cite{MR1222625}, our main
goal is to use this algorithm as a stepping stone in the development of our
approximation algorithm for the concave cost JRP in the following section.

In Section \ref{sect:jrp}, we apply our technique to the concave cost JRP.
We first describe the difficulty in using piecewise-linear approximation on
the concave cost JRP. We then introduce a more general version of the
classical JRP, which we call generalized JRP, and an exponentially-sized
integer programming formulation for it. In Section \ref{sect:jrp:gener},
using the 2-approximation algorithm of Levi et al. \cite{MR2233997} as the
basis, we obtain an algorithm for the generalized JRP that provides a
4-approximation guarantee and has exponential running time. In Section
\ref{sect:jrp:techn}, we obtain a strongly polynomial 4-approximation
algorithm for the concave cost JRP.
\section{Concave Cost Facility Location}
\label{sect:loc}
We first develop our technique for concave cost facility location, and then
apply it to other problems. We begin by describing the 1.61-approximation
algorithm for classical facility location due to Jain et al.
\cite{MR2146253}. We assume the reader is familiar with the primal-dual
method for approximation algorithms \cite[see e.g.][]{Goemans:1997:PDM}.

Let $[n] = \{1, \dots, n\}$. The classical facility location problem,
defined in Section \ref{sect:intro:loc}, can be formulated as an integer
program as follows:
\begin{subequations}
\label{ip:loc:classic}
\begin{align}
\min\ &\sum_{j=1}^n f_j y_j + \sum_{i=1}^m \sum_{j=1}^n c_{ij} d_i x_{ij},\\
\text{s.t.}\ &\sum_{j=1}^n x_{ij}=1, &i\in [m],\\
&0\le x_{ij} \le y_j, &i\in [m], j\in [n],\\
&y_j\in \{0,1\}, &j\in [n].
\end{align}
\end{subequations}
Recall that $f_j \in \bbR_+$ are the facility opening costs, $c_{ij} \in
\bbR_+$ are the costs of connecting customers to facilities, and $d_i \in
\bbR_+$ are the customer demands. We assume that the connection costs
$c_{ij}$ obey the metric inequality, and that the demands $d_i$ are
positive. Note that we do not need the constrains $x_{ij}\in \{0,1\}$, since
for any fixed $y\in \{0,1\}^n$, the resulting feasible polyhedron is
integral.

Consider the linear programming relaxation of problem (\ref{ip:loc:classic})
obtained by replacing the constraints $y_j\in \{0,1\}$ with $y_j\ge 0$. The
dual of this LP relaxation is:
\begin{subequations}
\label{lp:loc:classic-d}
\begin{align}
\max\ &\sum_{i=1}^m v_i,\\
\text{s.t.}\ &v_i \le c_{ij} d_i + w_{ij}, &i\in [m], j\in [n],
\label{lp:loc:classic-d:v}\\
&\sum_{i=1}^m w_{ij} \le f_j, &j\in [n],\\
&w_{ij} \ge 0, &i\in [m], j\in [n].
\end{align}
\end{subequations}
Since $w_{ij}$ do not appear in the objective, we can assume that they are
as small as possible without violating constraint
(\ref{lp:loc:classic-d:v}). In other words, we assume the invariant $w_{ij}
= \max\{0, v_i - c_{ij} d_i\}$. We will refer to dual variable $v_i$ as the
\emph{budget} of customer $i$. If $v_i \ge c_{ij} d_i$, we say that customer
$i$ \emph{contributes} to facility $j$, and $w_{ij}$ is its contribution.
The total contribution received by a facility $j$ is $\sum_{i=1}^m w_{ij}$.
A facility $j$ is \emph{tight} if $\sum_{i=1}^m w_{ij}=f_j$ and
\emph{over-tight} if $\sum_{i=1}^m w_{ij} > f_j$.

The primal complementary slackness constraints are:
\begin{subequations}
\label{eq:loc:slack}
\begin{align}
&x_{ij}(v_i - c_{ij} d_i - w_{ij}) = 0, &i\in [m], j\in [n],
\label{eq:loc:slack:x}\\
&y_j\left(\sum_{i=1}^n w_{ij} - f_j\right) = 0, &j\in [n].
\label{eq:loc:slack:y}
\end{align}
\end{subequations}
Suppose that $(x,y)$ is an integral primal feasible solution, and $(v,w)$ is
a dual feasible solution. Then, constraint (\ref{eq:loc:slack:x}) says that
customer $i$ can connect to facility $j$ (i.e. $x_{ij} = 1$) in the primal
solution only if $j$ is the closest to $i$ with respect to the modified
connection costs $c_{ij} + w_{ij} / d_i$. Constraint (\ref{eq:loc:slack:y})
says that facility $j$ can be opened in the primal solution (i.e. $y_j=1$)
only if it is tight in the dual solution.

The algorithm of Jain et al. starts with dual feasible solution $(v,w)=0$
and iteratively updates it, while maintaining dual feasibility and
increasing the dual objective. (The increase in the dual objective is not
necessarily monotonic.) At the same time, guided by the primal complementary
slackness constraints, the algorithm constructs an integral primal solution.
The algorithm concludes when the integral primal solution becomes feasible;
at this point the dual feasible solution provides a lower bound on the
optimal value.

We introduce the notion of time, and associate to each step of the algorithm
the time when it occurs. In the algorithm, we denote the time by $t$.
\begin{algorithm}[H]
\small
\algname{Algorithm FLPD}{$m, n \in \mathbb{Z}_+$; $c\in \bbR^{mn}_+, 
f\in \bbR^n_+, d\in \bbR^m_+$}
\begin{algtab}
\alglabel{alg:flpd:start}
Start at time $t=0$ with the dual solution $(v,w) = 0$. All facilities are
closed and all customers are unconnected, i.e. $(x,y)=0$.\\
\alglabel{alg:flpd:while}
\textbf{While} there are unconnected customers:\\
\algbegin 
\alglabel{alg:flpd:comp_t} 
Increase $t$ continuously. At the same
time increase $v_i$ and $w_{ij}$ for unconnected customers $i$ so as to
maintain $v_i = t d_i$ and $w_{ij} = \max \{ 0, v_i - c_{ij} d_i\}$. The
increase stops when a closed facility becomes tight, or an unconnected
customer begins contributing to an open facility.\\
\alglabel{alg:flpd:open} 
If a closed facility $j$ became tight, open it. For
each customer $i$ that contributes to $j$, connect $i$ to $j$, set $v_i =
c_{ij} d_i$, and set $w_{ij'} = \max\{0, v_i - c_{ij'} d_i\}$ for all
facilities $j'$. \\
\alglabel{alg:flpd:conn} 
If an unconnected customer $i$ began contributing
to an open facility $j$, connect $i$ to $j$.
\\ \algend
\alglabel{alg:flpd:ret}
Return $(x,y)$ and $(v,w)$.
\end{algtab}
\end{algorithm}
\vskip -1em 
In case of a tie between tight facilities in step
(\ref{alg:flpd:open}), between customers in step (\ref{alg:flpd:conn}), or
between steps (\ref{alg:flpd:open}) and (\ref{alg:flpd:conn}), we break the
tie arbitrarily. Depending on the customers that remain unconnected, in the
next iteration of loop (\ref{alg:flpd:while}), another one of the facilities
involved in the tie may open immediately, or another one of the customers
involved in the tie may connect immediately.
\begin{theorem}[\citealp{MR2146253}]
\label{th:flpd:161}
Algorithm \textsc{FLPD} is a 1.61-approximation algorithm for the classical
facility location problem.
\end{theorem}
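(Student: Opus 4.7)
The plan is to show that the primal-integral solution $(x,y)$ returned by \textsc{FLPD} has cost at most $1.61\cdot\sum_i v_i$, where $(v,w)$ is the dual solution returned. Since $(v,w)$ will be dual-feasible by construction, weak LP duality then gives $\sum_i v_i \le \mathrm{OPT}_{LP} \le \mathrm{OPT}_{IP}$, yielding the stated ratio.

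First I would verify the standard primal-dual bookkeeping. Dual feasibility of constraint (\ref{lp:loc:classic-d:v}) is maintained by the invariant $w_{ij}=\max\{0,v_i-c_{ij}d_i\}$; the budget capacity $\sum_i w_{ij}\le f_j$ is enforced by stopping the time increase in step (\ref{alg:flpd:comp_t}) the instant a closed facility becomes tight; and the reset in step (\ref{alg:flpd:open}) (which only \emph{decreases} $v_i$ and $w_{ij'}$ for the newly-connected customer) preserves feasibility at every other facility. The algorithm terminates in finite time because each event connects at least one customer. At termination, every customer $i$ is connected to some open facility $j$, so $(x,y)$ is primal-feasible; and by construction each open facility satisfies $\sum_i w_{ij}=f_j$ (slackness in (\ref{eq:loc:slack:y})).

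Next I would decompose customers into \emph{directly connected} ones — those $i$ whose facility $j$ became tight while $w_{ij}>0$ — and \emph{indirectly connected} ones (connected in step (\ref{alg:flpd:conn}) or reset by a later-opening facility they did not help pay for). For a directly connected pair, $v_i=c_{ij}d_i+w_{ij}$ holds at the moment $j$ opens, and summing over all direct contributors to $j$ one gets $\sum_{i\text{ direct}} v_i = \sum_{i\text{ direct}}c_{ij}d_i + f_j$. For an indirectly connected customer $i$ assigned to $j$, one uses a \emph{witness}: there exists some customer $i'$ directly connected to $j$ that was contributing to another open facility $j''$ at the moment $i$ would have begun contributing elsewhere; by the metric inequality and the budget-stopping time one derives an inequality of the form $c_{ij}d_i \le v_i + (\text{terms involving }v_{i'}\text{ and }w_{i'j}).$

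The hard step — and this is where Jain, Mahdian, Markakis, Saberi, and Vazirani do the real work — is to combine these per-customer inequalities into a global bound. Their trick is to \emph{scale the facility costs by a factor} $\gamma_f\approx 1.61$ before running \textsc{FLPD}, which inflates the duals enough so that, for every open facility $j$ of $k$ contributors, the total charged amount $f_j + \sum_{i\text{ assigned to }j} c_{ij}d_i$ is at most $1.61\sum_i v_i$. The sharpest per-$k$ bound is captured by a \emph{factor-revealing LP} whose variables are the $v_i$'s and the connection costs of the contributors sorted by contribution time, whose constraints encode precisely the inequalities derived above, and whose objective is the ratio of charged cost to $\sum v_i$; solving (or rather, bounding) the supremum of this LP over all $k$ gives the constant $1.61$. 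I would invoke this factor-revealing LP computation as a lemma from \cite{MR2146253} rather than re-derive it, since it is the purely numerical heart of the argument and is orthogonal to the design choices in \textsc{FLPD}. Combining the bound across facilities yields $\sum_j f_j y_j + \sum_{i,j} c_{ij}d_i x_{ij} \le 1.61\sum_i v_i$, completing the proof.
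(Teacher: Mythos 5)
The paper never proves this theorem: it is imported from Jain et al.\ \cite{MR2146253}, and the only argument supplied is the remark that the integer program and Algorithm \textsc{FLPD} as presented here are equivalent to the originals. So the proposal must be measured against the analysis in \cite{MR2146253}, and there your sketch misstates the mechanism by which the constant $1.61$ arises. The 1.61 algorithm involves no scaling of the facility costs; scaling $f_j$ (by a factor near $1.504$, followed by greedy augmentation) is the device of the later 1.52-approximation of Mahdian et al.\ \cite{MR2247734}. The 1.61 bound is obtained by \emph{dual fitting}: the cost of the solution (facility plus connection) equals exactly $\sum_i \alpha_i$, where $\alpha_i$ is customer $i$'s budget frozen at the moment it is first connected; these $\alpha$'s are in general \emph{not} dual feasible, and the factor-revealing LP is used to prove that $\alpha/1.61$ is feasible, whence $\sum_i \alpha_i \le 1.61\,\mathrm{OPT}$. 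Feasibility, not the cost accounting, is the hard part.

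Your hybrid of a feasible dual plus a charging argument does not fit together. If $(v,w)$ denotes the pair actually maintained by \textsc{FLPD} --- which stays feasible precisely because step (\ref{alg:flpd:open}) resets $v_i$ to $c_{ij}d_i$ --- then at termination $\sum_i v_i$ accounts only for connection costs, and the claimed bound $\mathrm{cost} \le 1.61 \sum_i v_i$ is simply false: with one customer ($d_1=1$), one facility, $c_{11}=0$ and $f_1=1$, the algorithm returns a primal solution of cost $1$ while $\sum_i v_i = 0$. If instead $v_i$ means the pre-reset (frozen) budget, then your per-facility identity $\sum_{i\,\mathrm{direct}} v_i = \sum_i c_{ij}d_i + f_j$ is correct, but these are exactly the $\alpha$'s above and they need not satisfy $\sum_i \max\{0, v_i - c_{ij}d_i\} \le f_j$: once a customer is connected it only offers $\max\{0,(c_{ij}-c_{ij'})d_i\}$ to another facility $j'$, so nothing in the algorithm prevents the frozen budgets from collectively over-paying some facility at the end. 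Recovering feasibility after dividing by $1.61$ (using the triangle inequality and the ordering of connection times) is precisely what the factor-revealing LP certifies; it is not a numerical afterthought that can be bought by pre-scaling $f$, and without it the weak-duality chain in your first paragraph has no valid dual solution to apply to. Finally, given that the paper's own treatment is by citation, the one thing a self-contained proof here would actually need to supply --- the equivalence of this presentation (with its $v_i$-reset bookkeeping) to Algorithm 2 of \cite{MR2146253} --- is also missing from the sketch.
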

Note that the integer program and the algorithm in our presentation are
different from those in \cite{MR2146253}. However both the integer program
and the algorithm are equivalent to those in the original presentation.
\subsection{The Technique}
\label{sect:loc:techn}
The concave cost facility location problem, also defined in Section
\ref{sect:intro:loc}, can be written as a mathematical program:
\begin{subequations}
\label{mp:loc:conc}
\begin{align}
\min\ &\sum_{j=1}^n \phi_j\left(\sum_{i=1}^m d_i x_{ij}\right) +
\sum_{i=1}^m \sum_{j=1}^n c_{ij} d_i x_{ij},\\ 
\text{s.t.}\ &\sum_{j=1}^n x_{ij}=1, &i\in [m],\\ 
&x_{ij} \ge 0, &i\in [m], j\in [n].
\end{align}
\end{subequations}
Here, $\phi_j : \bbR_+ \to \bbR_+$ are the facility cost functions, with
each function being concave nondecreasing. Assume without loss of generality
that $\phi_j(0) = 0$ for all cost functions. We omit the constraints
$x_{ij}\in \{0,1\}$, which are automatically satisfied at any vertex of the
feasible polyhedron. Since the objective is concave, this problem always has
a vertex optimal solution \cite{MR0131816}.

Suppose that the concave functions $\phi_j$ are piecewise linear on
$(0,+\infty)$ with $P$ pieces. The functions can be written as
\begin{equation}
\phi_j(\xi_j) = 
\begin{cases}
\min \{  f_{jp} + s_{jp} \xi_j : p\in [P] \}, &\xi_j > 0,\\
0, &\xi_j = 0.
\end{cases}
\end{equation}
As is well-known \cite[e.g.][]{Feldman:1966:WLC}, in this case problem
\eqref{mp:loc:conc} can be written as the following integer program:
\begin{subequations}
\label{ip:loc:pwl}
\begin{align}
\min\ &\sum_{j=1}^n \sum_{p=1}^P f_{jp} y_{jp} + \sum_{i=1}^m \sum_{j=1}^n 
\sum_{p=1}^P (c_{ij}+s_{jp}) d_i x_{ijp},\\
\text{s.t.}\ &\sum_{j=1}^n \sum_{p=1}^P x_{ijp}=1, \hskip 11em i\in [m],\\
&0\le x_{ijp} \le y_{jp}, \hskip 8.25em i\in [m], j\in [n],p\in [P],\\
&y_{jp}\in \{0,1\}, \hskip 11.125em j\in [n], p\in [P].
\end{align}
\end{subequations}
This integer program is a classical facility location problem with $Pn$
facilities and $m$ customers. Every piece $p$ in the cost function $\phi_j$
of every facility $j$ in problem \eqref{mp:loc:conc} corresponds to a
facility $\{j,p\}$ in this problem. The new facility has opening cost
$f_{jp}$. The set of customers is the same, and the connection cost from
facility $\{j,p\}$ to customer $i$ is $c_{ij} + s_{jp}$. Note that the new
connection costs again satisfy the metric inequality.

We now return to the general case, when the functions $\phi_j$ need not be
piecewise linear. Assume that $\phi_j$ are given by an oracle that returns
the function value $\phi_j(\xi_j)$ and derivative $\phi'_j(\xi_j)$ in time
$O(1)$ for $\xi_j > 0$. If the derivative at $\xi_j$ does not exist, the
oracle returns the right derivative, and we denote $\phi'_j(\xi_j) =
\lim_{\zeta \to \xi_j+} \frac{\phi_j(\zeta) - \phi_j(\xi_j)}{\zeta -
  \xi_j}$. The right derivative always exists at $\xi_j>0$, since $\phi_j$
is concave on $[0,+\infty)$.

We interpret each concave function $\phi_j$ as a piecewise-linear function
with an infinite number of pieces. For each $p>0$, we introduce a tangent
$f_{jp} + s_{jp} \xi_j$ to $\phi_j$ at $p$, with
\begin{equation}
s_{jp} = \phi'_j(p),
\qquad
f_{jp} = \phi_j(p) - p s_{jp}.
\end{equation}
We also introduce a tangent $f_{j0} + s_{j0} \xi_j$ to $\phi_j$ at $0$, with
$f_{j0} = \lim_{p\to 0+} f_{jp}$ and $s_{j0} = \lim_{p\to 0+} s_{jp}$. The
limit $\lim_{p\to 0+} f_{jp}$ is finite because $f_{jp}$ are nondecreasing
in $p$ and bounded from below. The limit $\lim_{p\to 0+} s_{jp}$ is either
finite or $+\infty$ because $s_{jp}$ are nonincreasing in $p$, and we assume
that this limit is finite.

Our technique also applies when $\lim_{p\to 0+} s_{jp} = +\infty$, in which
case we introduce tangents to $\phi_j$ only at points $p>0$, and then
proceed in similar fashion. In some computational settings, using
derivatives is computationally expensive. In such cases, we can assume that
the demands are rational, and let $d_i = \frac{d'_i}{d''_i}$ with $d''_i>0$,
and $d'_i$ and $d''_i$ coprime integers. Also let $\Delta = \frac{1}{d''_1
  d''_2 \dots d''_m}$. Then, we can use the quantity $\frac{\phi_j(\lfloor
  \xi_j+\Delta \rfloor) - \phi_j(\lfloor \xi_j \rfloor)}{\Delta}$ instead of
$\phi'_j(\xi_j)$ throughout.

The functions $\phi_j$ can now be expressed as:
\begin{equation}
\label{eq:loc:pwl-inf}
\phi_j(\xi_j) = \begin{cases}
\min \{ f_{jp} + s_{jp} \xi_j : p\ge 0\}, &\xi_j > 0,\\
0, &\xi_j=0.
\end{cases}
\end{equation}
When $\phi_j$ is linear on an interval $[\zeta_1, \zeta_2]$, all points
$p\in [\zeta_1, \zeta_2)$ yield the same tangent, that is $(f_{jp}, s_{jp})
  = (f_{jq}, s_{jq})$ for any $p,q\in [\zeta_1, \zeta_2)$. For convenience,
    we consider the tangents $(f_{jp}, s_{jp})$ for all $p\ge 0$, regardless
    of the shape of $\phi_j$. Sometimes, we will refer to a tangent
    $(f_{jp}, s_{jp})$ by the point $p$ that gave rise to it.

We apply formulation \eqref{ip:loc:pwl}, and obtain a classical facility
location problem with $m$ customers and an infinite number of facilities.
Each tangent $p$ to cost function $\phi_j$ of facility $j$ in problem
\eqref{mp:loc:conc} corresponds to a facility $\{j,p\}$ in the resulting
problem. Due to their origin, we will sometimes refer to facilities in the
resulting problem as tangents.

The resulting integer program is:
\begin{subequations}
\label{ip:loc:pwl-inf}
\begin{align}
\min\ &\sum_{j=1}^n \sum_{p\ge 0} f_{jp} y_{jp} + \sum_{i=1}^m 
\sum_{j=1}^n \sum_{p\ge 0} (c_{ij}+s_{jp}) d_i x_{ijp},\\
\text{s.t.}\ &\sum_{j=1}^n \sum_{p\ge 0} x_{ijp}=1, 
\hskip 10em i\in [m],\\
&0\le x_{ijp} \le y_{jp}, \hskip 8em i\in [m], j\in [n],p\ge 0,\\
&y_{jp}\in \{0,1\}, \hskip 10.75 em j\in [n], p\ge 0.
\end{align}
\end{subequations}
Of course, we cannot run Algorithm \textsc{FLPD} on this problem directly,
as it is infinitely-sized. Instead, we will show how to execute Algorithm
\textsc{FLPD} on this problem implicitly. Formally, we will devise an
algorithm that takes problem \eqref{mp:loc:conc} as input, runs in
polynomial time, and produces the same assignment of customers to facilities
as if Algorithm \textsc{FLPD} were run on problem \eqref{ip:loc:pwl-inf}.
Thereby, we will obtain a 1.61-approximation algorithm for problem
\eqref{mp:loc:conc}. We will call the new algorithm \textsc{ConcaveFLPD}.

The LP relaxation of problem \eqref{ip:loc:pwl-inf} is obtained by replacing
the constraints $y_{jp} \in \{0, 1\}$ with $y_{jp}\ge 0$. The dual of the LP
relaxation is:
\begin{subequations}
\label{lp:loc:pwl-inf-d}
\begin{align}
\max\ &\sum_{i=1}^m v_i,\\
\text{s.t.}\ &v_i \le (c_{ij} + s_{jp}) d_i + w_{ijp}, &i\in [m], 
j\in [n], p\ge 0, \label{lp:loc:pwl-inf-d:v}\\
&\sum_{i=1}^m w_{ijp} \le f_{jp}, &j\in [n], p\ge 0,\\
&w_{ijp} \ge 0, &i\in [m], j\in [n], p\ge 0.
\end{align}
\end{subequations}
Since the LP relaxation and its dual are infinitely-sized, the strong
duality property does not hold automatically, as in the finite LP case.
However, we do not need strong duality for our approach. We rely only on the
fact that the optimal value of integer program (\ref{ip:loc:pwl-inf}) is at
least that of its LP relaxation, and on weak duality between the LP
relaxation and its dual.
\subsection{Analysis of a Single Facility}
\label{sect:loc:single-fac}
In this section, we prove several key lemmas that will enable us to execute
Algorithm \textsc{FLPD} implicitly. We prove the lemmas in a simplified
setting when problem \eqref{mp:loc:conc} has only one facility, and all
connection costs $c_{ij}$ are zero. To simplify the notation, we omit the
facility subscript $j$.

Imagine that we are at the beginning of step (\ref{alg:flpd:comp_t}) of the
algorithm. To execute this step, we need to compute the time when the
increase in the dual variables stops. The increase may stop because a closed
tangent became tight, or because an unconnected customer began contributing
to an open tangent. We assume that there are no open tangents, which implies
that the increase stops because a closed tangent became tight.

Let $t=0$ at the beginning of the step, and imagine that $t$ is increasing
to $+\infty$. The customer budgets start at $v_i \ge 0$ and increase over
time at rates $\delta_i$. At time $t$, the budget for customer $i$ has
increased to $v_i + t \delta_i$. Connected customers are modeled by taking
$\delta_i = 0$, and unconnected customers by taking $\delta_i = d_i$. Denote
the set of connected customers by $C$ and the set of unconnected customers
by $U$, and let $\mu=|U|$.

First, we consider the case when all customers have zero starting budgets.
\begin{lemma}
\label{lm:loc:uni-sim}
If $v_i = 0$ for $i\in [m]$, then tangent $p^* = \sum_{i\in U} d_i$ becomes
tight first, at time $t^* = s_{p^*}+ \frac{f_{p^*}}{p^*}$. If there is a
tie, it is between at most two tangents.
\end{lemma}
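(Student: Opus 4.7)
My plan is to compute the tight-time $t^*_p$ of each tangent in closed form, reinterpret it geometrically as the value at $\xi = D := \sum_{i \in U} d_i$ of the tangent line to $\phi$ at $p$, and then use concavity of $\phi$ to identify $p^* = D$ as the minimizer. With $c_i \equiv 0$ and the stated initial budgets, the invariant gives $v_i(t) = t d_i$ for $i \in U$ while $v_i \equiv 0$ for $i \in C$, so using $w_{ip} = \max\{0, v_i - s_p d_i\}$, customers in $C$ contribute nothing and each $i \in U$ contributes $d_i (t - s_p)_+$ to tangent $p$. Summing over $U$, the total contribution at time $t \geq s_p$ is $(t - s_p) D$, so tangent $p$ becomes tight at
\[
t^*_p \;=\; s_p + \frac{f_p}{D},
\]
and equivalently $D \cdot t^*_p = f_p + s_p D = \phi_p(D)$, where $\phi_p(\xi) := f_p + s_p \xi$ denotes the tangent line to $\phi$ at $p$.

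Since $\phi$ is concave and $\phi_p$ is its tangent at $p$, the supporting-line inequality $\phi_p \geq \phi$ holds pointwise; in particular $\phi_p(D) \geq \phi(D)$ for every $p \geq 0$, with equality iff $\phi$ is linear on the segment between $p$ and $D$ with slope $s_p$. Taking $p = D$ we have $\phi_D(D) = \phi(D)$ by definition of tangent, so $p^* = D$ attains the minimum and
\[
t^* \;=\; \frac{\phi(D)}{D} \;=\; s_{p^*} + \frac{f_{p^*}}{p^*},
\]
matching the statement.

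For the at-most-two-tangent tie claim, suppose some $p \neq D$ yields a tangent distinct from $\phi_D$ that also achieves $t^*$. The equality case above forces $\phi$ to be linear with slope $s_p$ on the segment joining $p$ and $D$, so $s_p$ must equal either the left derivative $\phi'_-(D)$ (when $p < D$) or the right derivative $\phi'_+(D)$ (when $p > D$); the latter already produces the same line as $\phi_D$ itself, so the only genuinely new possibility is a tangent of slope $\phi'_-(D)$ arising from a linear left-neighborhood of $D$. Hence at most two distinct tangent lines can be tied. The one place I would proceed carefully is the right-derivative convention at endpoints of linear pieces of $\phi$ (so that distinct parameters $p$ with matching $(f_p, s_p)$ are correctly identified), but this is bookkeeping rather than a conceptual obstacle.
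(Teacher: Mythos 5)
Your proposal is correct and follows essentially the same route as the paper's proof: express each tangent's tight time as $s_p + f_p/\sum_{i\in U} d_i$, view $f_p + s_p\sum_{i\in U} d_i$ as the tangent line's value at $\sum_{i\in U} d_i$, and invoke concavity (the supporting-line inequality) to conclude $p^*=\sum_{i\in U} d_i$ minimizes, with the tie analysis reducing to the left and right linear pieces of $\phi$ at that point. The paper phrases the tie case via the maximal linear intervals $[\zeta_1,p^*]$ and $[p^*,\zeta_2]$, which is the same bookkeeping you describe with one-sided derivatives.
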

\begin{proof}
A given tangent $p$ becomes tight at time $s_p+\frac{f_p}{\sum_{i\in U}
  d_i}$. Therefore,
\begin{equation}
p^* = \argmin_{p\ge 0} \left\{ s_p+\frac{f_p}{\sum_{i\in U} d_i}\right\}
= \argmin_{p\ge 0} \left\{ s_p \sum_{i\in U} d_i + f_p\right\}.
\end{equation}
The quantity $s_p \sum_{i\in U} d_i + f_p$ can be viewed as the value of the
affine function $f_p+s_p\xi$ at $\xi=\sum_{i\in U} d_i$. Since $f_p + s_p
\xi$ is tangent to $\phi$, and $\phi$ is concave,
\begin{equation}
f_p + s_p \sum_{i\in U} d_i \ge \phi\left(\sum_{i\in U} d_i\right)
\quad\text{for}\quad p\ge 0.
\end{equation}
On the other hand, for tangent $p^* = \sum_{i\in U} d_i$, we have $f_{p^*} +
s_{p^*} \sum_{i\in U} d_i = \phi\bigl( \sum_{i\in U} d_i \bigr)$. Therefore,
tangent $p^*$ becomes tight first, at time $t^* = s_{p^*}+
\frac{f_{p^*}}{p^*}$. (See Figure \ref{fig:loc:uni-sim}.)

Concerning ties, for a tangent $p$ to become tight first, it has to satisfy
$f_p + s_p \sum_{i\in U} d_i = \phi\bigl( \sum_{i\in U} d_i \bigr)$, or in
other words it has to be tangent to $\phi$ at $\sum_{i\in U} d_i$. We
consider two cases. First, let $\zeta_2$ be as large as possible so that
$\phi$ is linear on $[p^*, \zeta_2]$. Then, any point $p\in [p^*, \zeta_2)$
  yields the same tangent as $p^*$, that is $(f_p, s_p) = (f_{p^*},
  s_{p^*})$. Second, let $\zeta_1$ be as small as possible so that $\phi$ is
  linear on $[\zeta_1, p^*]$. Then, any point $p\in [\zeta_1, p^*)$ yields
    the same tangent as $\zeta_1$, that is $(f_p, s_p) = (f_{\zeta_1},
    s_{\zeta_1})$. Tangent $\zeta_1$ is also tangent to $\phi$ at
    $\sum_{i\in U} d_i$, and may be different from tangent $p^*$. Tangents
    $p\not\in [\zeta_1, \zeta_2]$ have $f_p + s_p \sum_{i\in U} d_i >
    \phi\bigl(\sum_{i\in U} d_i\bigr)$. Therefore, in a tie, at most two
    tangents, $\zeta_1$ and $p^*$, become tight first.
\end{proof}
\begin{figure}[t]
\begin{center}
\epsfig{file=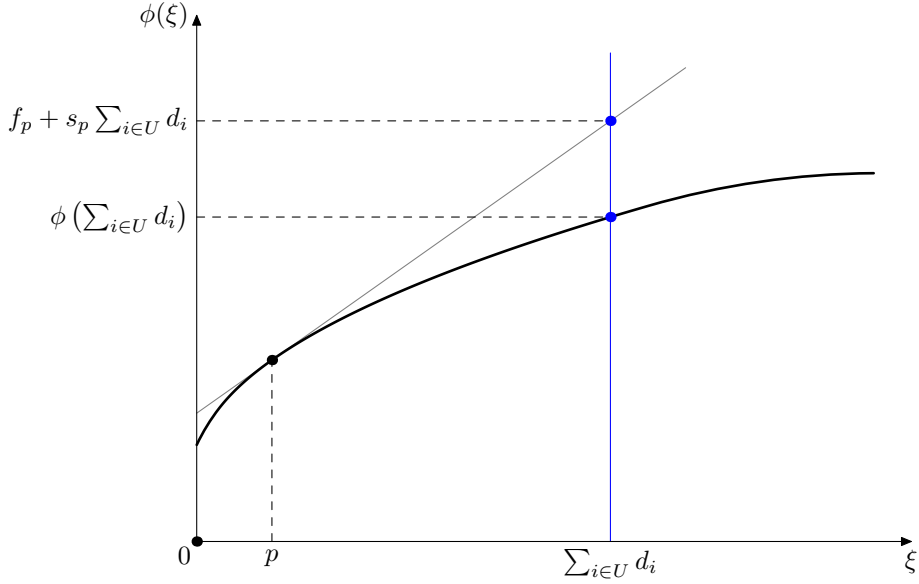}
\end{center}
\caption{Illustration of the proof of Lemma \ref{lm:loc:uni-sim}.
\label{fig:loc:uni-sim}}
\end{figure}

Next, we return to the more general case when customers have nonnegative
starting budgets. Define
\begin{equation}
\label{eq:loc:def-pi}
p_i(t) =\min\{ p\ge 0 : v_i + t \delta_i \ge s_p d_i\}, 
\qquad i\in [m],\\
\end{equation}
If $v_i + t \delta_i < s_p d_i$ for every $p\ge 0$, let $p_i(t) = +\infty$.
Otherwise, the minimum is well-defined, since $s_p$ is right-continuous in
$p$.

Intuitively, $p_i(t)$ is the leftmost tangent to which customer $i$ is
contributing at time $t$. Note that $s_p$ is decreasing in $p$, since $\phi$
is a concave function. Therefore, customer $i$ contributes to every tangent
to the right of $p_i(t)$, and does not contribute to any tangent to the left
of $p_i(t)$. For any two customers $i$ and $j$,
\begin{subequations}
\label{eq:loc:monot-pi}
\begin{align}
(v_i + t \delta_i) / d_i > (v_j + t \delta_j) / d_j 
\quad &\Rightarrow \quad p_i(t) \le p_j(t),\\
(v_i + t \delta_i) / d_i = (v_j + t \delta_j) / d_j 
\quad &\Rightarrow \quad p_i(t) = p_j(t),\\
(v_i + t \delta_i) / d_i < (v_j + t \delta_j) / d_j
\quad &\Rightarrow \quad p_i(t) \ge p_j(t).
\end{align}
\end{subequations}

Assume without loss of generality that the set of customers is ordered so
that customers $1, \dots, \mu$ are unconnected, customers $\mu+1, \dots, m$
are connected, and
\begin{subequations}
\begin{gather}
v_1 / d_1  \ge v_2 / d_2 \ge \dots \ge v_{\mu} / d_{\mu},\\
v_{\mu+1} / d_{\mu+1} \ge v_{\mu+2} / d_{\mu+2} \ge \dots \ge 
v_m / d_m. 
\end{gather}
\end{subequations}
Note that $(v_i + t \delta_i) / d_i = v_i / d_i$ for connected customers,
and $(v_i + t \delta_i) / d_i = v_i / d_i + t$ for unconnected ones. By
property (\ref{eq:loc:monot-pi}), at all times $t$, we have $p_1(t) \le
p_2(t) \le \dots \le p_{\mu}(t)$ and $p_{\mu+1}(t) \le p_{\mu+2}(t) \le
\dots \le p_m(t)$. As $t$ increases, $p_i(t)$ for $i\in C$ are unchanged,
while $p_i(t)$ for $i\in U$ decrease. (See Figure \ref{fig:loc:multi-sim}.)
\begin{figure}[t]
\begin{center}
\epsfig{file=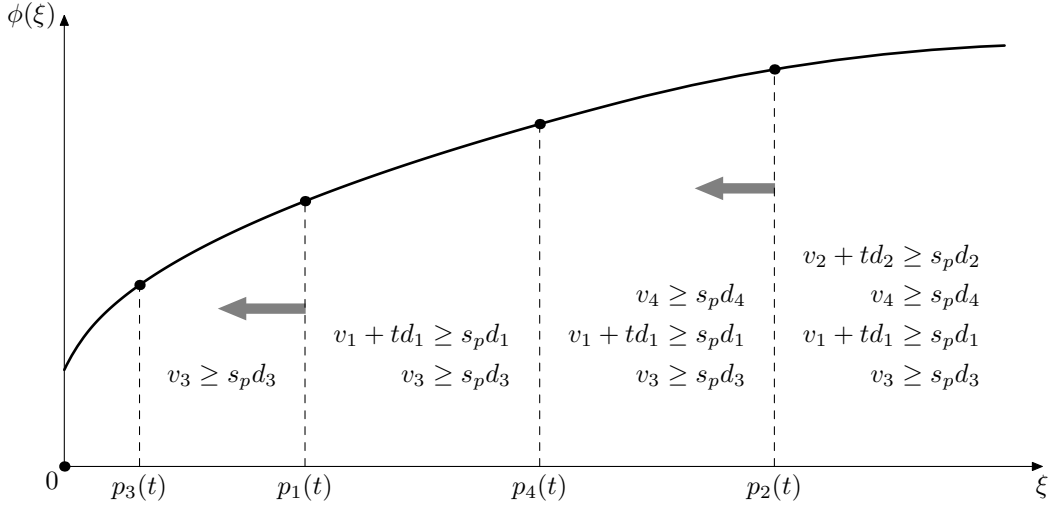}
\end{center}
\caption{Illustration of the definition of $p_i(t)$. Here $U=\{1,2\}$ and
  $C=\{3,4\}$. The gray arrows show how $p_i(t)$ change as $t$ increases.
  The inequalities show the set of customers that contribute to the tangents
  in each of the intervals defined by $p_i(t)$.
\label{fig:loc:multi-sim}}
\end{figure}

Let
\begin{subequations}
\begin{align}
&I_k^{\rmu}(t) = [p_k(t), p_{k+1}(t)), &1\le k < \mu,\\
&I_l^{\rmc}(t) = [p_l(t), p_{l+1}(t)), &\mu+1 \le l < m,
\end{align}
\end{subequations}
with $I_0^{\rmu}(t) = [0, p_1(t))$ and $I_{\mu}^{\rmu}(t) = [p_{\mu}(t),
    +\infty)$, as well as $I_{\mu}^{\rmc}(t) = [0, p_{\mu+1}(t))$ and
      $I^{\rmc}_{m}(t) = [p_m(t), +\infty)$. When an interval has the form
        $[+\infty, +\infty)$, we interpret it to be empty. Consider the
          intervals
\begin{equation}
I_{kl}(t) = I^{\rmu}_k(t) \cap I^{\rmc}_l(t), \qquad\qquad
0 \le k \le \mu \le l \le m.
\end{equation}
At any given time $t$, some of the intervals $I_{kl}(t)$ may be empty. As
time increases, these intervals may vary in size, empty intervals may become
non-empty, and non-empty intervals may become empty. The intervals partition
$[0,+\infty)$, that is $\cup_{0\le k\le \mu\le l\le m} I_{kl}(t) =
  [0,+\infty)$, and $I_{kl}(t) \cap I_{rs}(t) = \emptyset$ for $(k,l) \neq
    (r,s)$.

Let $\omega_p(t)$ be the total contribution received by tangent $p$ at time
$t$. The tangents on each interval $I_{kl}(t)$ receive contributions from
unconnected customers $\{1, \dots, k\}$ and connected customers $\{ \mu+1,
\dots, l\}$. We define $C(k,l) = \{1,\dots,k\} \cup \{\mu+1,\dots,l\}$ to be
the set of customers that contribute to tangents in $I_{kl}(t)$.

For each interval $I_{kl}(t)$ with $k\ge 1$, we define an alternate setting
$A(k,l)$, where all starting budgets are zero, customers in $C(k,l)$
increase their budgets at rates $d_i$, and the remaining customers do not
change their budgets. Let $\omega^{kl}_p(\tau_{kl})$ be the total
contribution received by tangent $p$ at time $\tau_{kl}$ in the alternate
setting $A(k,l)$. We establish a correspondence between times $t$ in the
original setting and times $\tau_{kl}$ in $A(k,l)$, given by $\tau_{kl} =
\beta_{kl} + \alpha_{kl} t$, with $\alpha_{kl} = \sum_{i=1}^k d_i \big/
\sum_{i\in C(k,l)} d_i$ and $\beta_{kl} = \sum_{i\in C(k,l)} v_i \big/
\sum_{i\in C(k,l)} d_i$. Since $\alpha_{kl}>0$, times $t\in [0,+\infty)$ are
  mapped one-to-one to times $\tau_{kl} \in [\beta_{kl}, +\infty)$.

The following two lemmas relate the original setting to the alternate
settings $A(k,l)$.
\begin{lemma}
\label{lm:loc:contrib-in}
Given a time $t$ and an interval $I_{kl}(t)$ with $k\ge 1$, any tangent
$p\in I_{kl}(t)$ receives the same total contribution at time $t$ in the
original setting as at time $\tau_{kl}$ in $A(k,l)$, that is $\omega_p(t) =
\omega^{kl}_p(\tau_{kl})$.
\end{lemma}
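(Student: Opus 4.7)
The plan is to compute both sides of the claimed equality explicitly and verify that they match. First I would identify which customers contribute to a tangent $p\in I_{kl}(t)$ in the original setting. By (\ref{eq:loc:monot-pi}) together with the assumed orderings of budgets on $U$ and on $C$, the breakpoints satisfy $p_1(t)\le\cdots\le p_\mu(t)$ and $p_{\mu+1}(t)\le\cdots\le p_m(t)$, and customer $i$ contributes to $p$ iff $p\ge p_i(t)$. For $p\in I_{kl}(t)=[p_k(t),p_{k+1}(t))\cap[p_l(t),p_{l+1}(t))$ this happens exactly for $i\in C(k,l)=\{1,\dots,k\}\cup\{\mu+1,\dots,l\}$, so
\[
\omega_p(t)=\sum_{i\in C(k,l)}\bigl((v_i+t\delta_i)-s_p d_i\bigr).
\]

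Next I would compute $\omega^{kl}_p(\tau_{kl})$ in $A(k,l)$. Every customer outside $C(k,l)$ keeps budget $0$ throughout $A(k,l)$ and contributes nothing, while each $i\in C(k,l)$ has budget $\tau_{kl}d_i$ at time $\tau_{kl}$, contributing $\max\{0,(\tau_{kl}-s_p)d_i\}$ to tangent $p$. The main obstacle is verifying that the $\max$ is unnecessary, i.e.\ that $\tau_{kl}\ge s_p$ for every $p\in I_{kl}(t)$. To see this, using $\delta_i=d_i$ for unconnected $i\le k$ and $\delta_i=0$ for connected $i$, the definitions of $\alpha_{kl}$ and $\beta_{kl}$ combine to
\[
\tau_{kl}=\beta_{kl}+\alpha_{kl}t=\frac{\sum_{i\in C(k,l)}(v_i+t\delta_i)}{\sum_{i\in C(k,l)}d_i},
\]
which is the $d_i$-weighted average of the ratios $(v_i+t\delta_i)/d_i$ over $i\in C(k,l)$. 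Since each such customer already contributes to $p$ in the original setting, we have $(v_i+t\delta_i)/d_i\ge s_p$ for all $i\in C(k,l)$, and this inequality passes to the weighted average, yielding $\tau_{kl}\ge s_p$.

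Combining the two computations,
\[
\omega^{kl}_p(\tau_{kl})=(\tau_{kl}-s_p)\sum_{i\in C(k,l)}d_i=\sum_{i\in C(k,l)}(v_i+t\delta_i)-s_p\sum_{i\in C(k,l)}d_i=\omega_p(t),
\]
which proves the lemma. The condition $k\ge 1$ is used only to guarantee that $C(k,l)\ne\emptyset$ so the denominator in the definition of $\tau_{kl}$ is positive; the boundary cases $k=\mu$ or $l\in\{\mu,m\}$, where some of the defining intervals are half-infinite, do not affect the argument.
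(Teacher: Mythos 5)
Your proposal is correct and follows essentially the same route as the paper: identify the contributing set as $C(k,l)$, rewrite $\omega_p(t)=(\tau_{kl}-s_p)\sum_{i\in C(k,l)}d_i$ via the definitions of $\alpha_{kl}$ and $\beta_{kl}$, and verify $\tau_{kl}\ge s_p$ so the $\max$ in $\omega^{kl}_p(\tau_{kl})$ can be dropped. The only cosmetic difference is that the paper deduces $\tau_{kl}\ge s_p$ from $\omega_p(t)\ge 0$, while you deduce it from the weighted-average interpretation of $\tau_{kl}$; these are equivalent observations.
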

\begin{proof}
The total contribution to $p$ at time $t$ in the original setting is
\begin{equation}
\begin{split}
\omega_p(t) &= \sum_{i=1}^k (v_i + t d_i - s_pd_i) 
+ \sum_{i=\mu+1}^l (v_i - s_pd_i) \\
&= \sum_{i\in C(k,l)} (v_i - s_p d_i) + t\sum_{i=1}^k d_i 
= \sum_{i\in C(k,l)} (v_i + \alpha_{kl} t d_i - s_p d_i) \\
&= (\beta_{kl} + \alpha_{kl} t - s_p) \sum_{i\in C(k,l)} d_i 
= (\tau_{kl} - s_p) \sum_{i\in C(k,l)} d_i.
\end{split}
\end{equation}
Since $\omega_p(t) \ge 0$, it follows that $\tau_{kl} - s_p \ge 0$, and
therefore
\begin{equation}
(\tau_{kl} - s_p) \sum_{i\in C(k,l)} d_i = \sum_{i\in C(k,l)} \max\{ 0,
  \tau_{kl} d_i - s_p d_i \} = \omega^{kl}_p(\tau_{kl}). 
\qedhere
\end{equation}
\end{proof}
\begin{lemma}
\label{lm:loc:contrib-out}
Given a time $t$ and an interval $I_{kl}(t)$ with $k\ge 1$, a tangent $p$
receives at least as large a total contribution at time $t$ in the original
setting as at time $\tau_{kl}$ in $A(k,l)$, that is $\omega_p(t) \ge
\omega^{kl}_p(\tau_{kl})$.
\end{lemma}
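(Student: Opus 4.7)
The plan is to bound $\omega_p(t)$ from below by retaining only the contributions of customers in $C(k,l)$, and then to mirror the algebraic manipulation from the proof of Lemma \ref{lm:loc:contrib-in}. Starting from the definition
\[
\omega_p(t) = \sum_{i=1}^{\mu} \max\{0, v_i + t d_i - s_p d_i\} + \sum_{i=\mu+1}^{m} \max\{0, v_i - s_p d_i\},
\]
which is a sum of nonnegative terms, I would first discard all customers outside $C(k,l) = \{1,\dots,k\} \cup \{\mu+1,\dots,l\}$ to obtain
\[
\omega_p(t) \ge \sum_{i=1}^{k} \max\{0, v_i + t d_i - s_p d_i\} + \sum_{i=\mu+1}^{l} \max\{0, v_i - s_p d_i\}.
\]

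Next I would split into two cases based on the sign of $\tau_{kl} - s_p$. If $\tau_{kl} \le s_p$, then $\omega^{kl}_p(\tau_{kl}) = \sum_{i\in C(k,l)} \max\{0, \tau_{kl} d_i - s_p d_i\} = 0$, and the bound is immediate from $\omega_p(t) \ge 0$. If $\tau_{kl} > s_p$, I would apply the elementary inequality $\max\{0, x\} \ge x$ termwise to get
\[
\omega_p(t) \ge \sum_{i=1}^{k} (v_i + t d_i - s_p d_i) + \sum_{i=\mu+1}^{l} (v_i - s_p d_i),
\]
and then invoke exactly the rearrangement used in the proof of Lemma \ref{lm:loc:contrib-in} (factoring via $\alpha_{kl}$ and $\beta_{kl}$) to rewrite the right-hand side as $(\tau_{kl} - s_p) \sum_{i\in C(k,l)} d_i$, which equals $\omega^{kl}_p(\tau_{kl})$ in this regime.

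There is no serious obstacle. The only care needed is in handling the sign of $\tau_{kl} - s_p$, since dropping the $\max\{0,\cdot\}$ truncation is a valid lower bound but the resulting bare sum can be negative and would then no longer dominate $\omega^{kl}_p(\tau_{kl})$. The key conceptual difference from Lemma \ref{lm:loc:contrib-in} is that here we cannot assume $p \in I_{kl}(t)$, so we have no structural guarantee that the individual $C(k,l)$ terms $v_i + t d_i [i\in U] - s_p d_i$ are nonnegative; the inequality $\max\{0,x\} \ge x$ plus the case split on $\tau_{kl} - s_p$ is what compensates for losing that structural assumption.
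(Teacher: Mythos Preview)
Your proof is correct. The paper takes a slightly different route in the case $\tau_{kl} - s_p \ge 0$: rather than dropping terms and applying $\max\{0,x\} \ge x$, it first locates the interval $I_{rs}(t)$ containing $p$, writes $\omega_p(t)$ as the bare sum $\sum_{i=1}^r (v_i + td_i - s_p d_i) + \sum_{i=\mu+1}^s (v_i - s_p d_i)$ over the actual set of contributors $C(r,s)$, and then analyzes the difference $\omega_p(t) - \omega^{kl}_p(\tau_{kl})$ as a combination of four summations (indexed by the symmetric difference of $C(r,s)$ and $C(k,l)$), each shown nonnegative by appealing to which customers do or do not contribute to tangents in $I_{rs}(t)$. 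Your argument is more economical: by keeping the $\max\{0,\cdot\}$ structure of $\omega_p(t)$ explicit, you never need to introduce $r$ and $s$, and the two elementary inequalities (discard nonnegative terms outside $C(k,l)$; then $\max\{0,x\}\ge x$ on the remaining terms) subsume the paper's four-case analysis in one stroke. The paper's version, on the other hand, makes more visible the structural reason the inequality holds---namely the comparison of the contributor sets $C(r,s)$ and $C(k,l)$---which is consonant with how the surrounding lemmas are phrased.
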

\begin{proof}
If $\tau_{kl} - s_p < 0$, then $\omega^{kl}_p(\tau_{kl}) = 0$, and thus
$\omega_p(t) \ge \omega^{kl}_p(\tau_{kl})$. If $\tau_{kl} - s_p\ge 0$, then
$\omega^{kl}_p(\tau_{kl}) = (\tau_{kl} - s_p) \sum_{i\in C(k,l)} d_i =
\sum_{i=1}^k (v_i + t d_i - s_p d_i) + \sum_{i=\mu+1}^l (v_i - s_p d_i)$.
Let $p\in I_{rs}(t)$ for some $r$ and $s$, and note that $\omega_p(t) =
\sum_{i=1}^r (v_i + t d_i - s_p d_i) + \sum_{i=\mu+1}^s (v_i - s_p d_i)$.

The difference between the two contributions can be written as
\begin{multline}
\qquad \omega_p(t) - \omega^{kl}_p(\tau_{kl}) = \sum_{i=k+1}^r (v_i + t d_i
 - s_p d_i) - \sum_{i=r+1}^k (v_i + t d_i - s_p d_i) \\ 
+ \sum_{i=l+1}^s (v_i - s_p d_i) - \sum_{i=s+1}^l (v_i - s_p d_i). \qquad
\end{multline}
Note that at least two of the four summations in this expression are always
empty. We now examine the summations one by one:
\begin{enumerate}
\item
$\sum_{i=k+1}^r (v_i + t d_i - s_p d_i)$. This summation is nonempty when
  $r>k$. In this case, in the original setting, customers $k+1, \dots, r$ do
  contribute to tangents in $I_{rs}(t)$ at time $t$. Therefore, $v_i + t d_i
  - s_p d_i \ge 0$ for $i=k+1, \dots, r$, and the summation is nonnegative.
\item
$-\sum_{i=r+1}^k (v_i + t d_i - s_p d_i)$. This summation is nonempty when
  $r<k$. In this case, in the original setting, customers $r+1, \dots, k$ do
  not contribute to tangents in $I_{rs}(t)$ at time $t$. Therefore, $v_i + t
  d_i - s_p d_i \le 0$ for $i=r+1,\dots, k$, and the summation is
  nonnegative.
\item
$\sum_{i=l+1}^s (v_i - s_p d_i)$. This summation is nonempty when $s>l$. In
  this case, in the original setting, customers $l+1, \dots, s$ do
  contribute to tangents in $I_{rs}(t)$ at time $t$. Therefore, $v_i - s_p
  d_i \ge 0$ for $i=l+1, \dots, s$, and the summation is nonnegative.
\item
$-\sum_{i=s+1}^l (v_i - s_p d_i)$. This summation is nonempty when $s<l$. In
  this case, in the original setting, customers $s+1, \dots, l$ do not
  contribute to tangents in $I_{rs}(t)$ at time $t$. Therefore, $v_i - s_p
  d_i \le 0$ for $i=s+1, \dots, l$, and the summation is nonnegative.
\end{enumerate}
As a result of the above cases, we obtain that $\omega_p(t) -
\omega_p^{kl}(\tau_{kl}) \ge 0$.
\end{proof}

When $k\ge 1$, we can apply Lemma \ref{lm:loc:uni-sim} to compute the first
tangent to become tight in $A(k,l)$, and the time when this occurs. Denote
the computed tangent and time by $p'_{kl}$ and $\tau'_{kl}$, and note that
$p'_{kl} = \sum_{i\in C(k,l)} d_i$ and $\tau'_{kl} = s_{p'_{kl}} +
\frac{f_{p'_{kl}}}{p'_{kl}}$. Let $t'_{kl} = \frac{\tau'_{kl} -
  \beta_{kl}}{\alpha_{kl}}$ be the time in the original setting that
corresponds to time $\tau'_{kl}$ in $A(k,l)$. Let $t^* = \min\{ t'_{kl} :
1\le k\le \mu\le l\le m\}$, and $p^* = p'_{\argmin\{ t'_{kl} : 1\le k\le
  \mu\le l\le m\}}$. The following two lemmas will enable us to show that
tangent $p^*$ becomes tight first in the original setting, at time $t^*$.
\begin{lemma}
\label{lm:loc:tang-ge}
If a tangent $p$ becomes tight at a time $t$ in the original setting, then
$t\ge t^*$.
\end{lemma}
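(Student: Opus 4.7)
The plan is to locate, at time $t$, the unique interval $I_{kl}(t)$ containing the tangent $p$, and then split into two cases according to whether $k\ge 1$ (some unconnected customer is contributing to $p$) or $k=0$ (none is). The $k\ge 1$ case reduces cleanly to the alternate setting $A(k,l)$ via Lemma \ref{lm:loc:contrib-in}, while the $k=0$ case has to be ruled out by a separate monotonicity argument, showing that $\omega_p$ has not changed since time $0$.

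For the case $k=0$, I would exploit the monotonicity of $p_i(\cdot)$ in $t$. Since $\delta_i=d_i>0$ for every unconnected customer, $v_i+t\delta_i$ is increasing in $t$, and since $s_p$ is nonincreasing in $p$, the threshold $p_i(t)=\min\{p\ge 0 : v_i + t\delta_i\ge s_p d_i\}$ is nonincreasing in $t$. Hence if $p<p_1(t)$, then $p<p_1(t')$ for every $t'\le t$ as well; and since $v_1/d_1\ge v_i/d_i$ for every unconnected $i$, no unconnected customer has contributed to $p$ at any time in $[0,t]$. The connected customers' contributions do not depend on time, so $\omega_p$ is constant on $[0,t]$. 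But by the standing assumption that there are no open (equivalently, tight) tangents at time $0$, we have $\omega_p(0)<f_p$, contradicting that $p$ is tight at $t$.

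For the main case $k\ge 1$, Lemma \ref{lm:loc:contrib-in} yields $\omega_p(t)=\omega^{kl}_p(\tau_{kl})$, where $\tau_{kl}=\beta_{kl}+\alpha_{kl}t$. Since $\omega_p(t)=f_p$, tangent $p$ is tight at time $\tau_{kl}$ in the alternate setting $A(k,l)$. Lemma \ref{lm:loc:uni-sim} applied to $A(k,l)$ (whose starting budgets are zero and whose active customers contribute at rates $d_i$) says that the first tangent to become tight in $A(k,l)$ does so at time $\tau'_{kl}$; consequently $\tau_{kl}\ge \tau'_{kl}$. Using $\alpha_{kl}>0$, the monotone inverse of the correspondence between $t$ and $\tau_{kl}$ yields $t\ge (\tau'_{kl}-\beta_{kl})/\alpha_{kl}=t'_{kl}\ge t^*$, which is the desired bound.

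The main subtlety is the $k=0$ case: it is tempting to try to use Lemma \ref{lm:loc:contrib-out} here, but that inequality goes the wrong way for drawing a contradiction, and in any event it is stated only for $k\ge 1$. So the $k=0$ case genuinely needs the separate direct argument above, leaning on monotonicity of the $p_i(\cdot)$ together with the hypothesis that no tangent is tight at time $0$; once that case is dispatched, the $k\ge 1$ case is essentially a transparent application of Lemmas \ref{lm:loc:contrib-in} and \ref{lm:loc:uni-sim}.
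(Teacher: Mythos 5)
Your proof is correct and takes essentially the same route as the paper: locate the interval $I_{kl}(t)$ containing $p$, rule out $k=0$ because contributions to tangents in $I_{0l}(t)$ never increase over time, and for $k\ge 1$ use Lemma \ref{lm:loc:contrib-in} to transfer tightness of $p$ to time $\tau_{kl}$ in $A(k,l)$, giving $\tau_{kl}\ge\tau'_{kl}$ and hence $t\ge t'_{kl}\ge t^*$. The only difference is that you spell out the monotonicity of the $p_i(\cdot)$ behind the $k=0$ exclusion, which the paper dispatches in a single sentence.
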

\begin{proof}
Since $\cup_{0\le r\le \mu\le s\le m} I_{rs}(t) = [0,+\infty)$, there is an
  interval $I_{kl}(t)$ that contains $p$. Since the contributions to
  tangents in the interval $I_{0l}(t)$ do not increase over time, $k\ge 1$.

Tangent $p$ is tight at time $t$ in the original setting, and therefore
$\omega_p(t) = f_p$. By Lemma \ref{lm:loc:contrib-in}, $p\in I_{kl}(t)$
implies that $\omega_p^{kl}(\tau_{kl}) = f_p$, and hence $p$ is tight at
time $\tau_{kl}$ in $A(k,l)$. It follows that $\tau_{kl} \ge \tau'_{kl}$,
and therefore $t \ge t'_{kl} \ge t^*$.
\end{proof}
\begin{lemma}
\label{lm:loc:tang-le}
Each tangent $p'_{kl}$ with $k\ge 1$ becomes tight at a time $t\le t'_{kl}$
in the original setting.
\end{lemma}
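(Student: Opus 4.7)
The plan is to use Lemma \ref{lm:loc:contrib-out} to transfer the tightness established in the alternate setting $A(k,l)$ back to the original setting, together with monotonicity of total contributions in $t$.

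First, I would observe that throughout step (\ref{alg:flpd:comp_t}), each quantity $v_i + t\delta_i$ is nondecreasing in $t$ (strictly increasing for $i\in U$, constant for $i\in C$), so the contribution $\max\{0, v_i + t\delta_i - s_p d_i\}$ of every customer to every fixed tangent $p$ is a continuous nondecreasing function of $t$. Consequently $\omega_p(t)$ is itself continuous and nondecreasing in $t$ for every fixed $p$.

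Next, I would invoke the definitions: by Lemma \ref{lm:loc:uni-sim} applied to $A(k,l)$, tangent $p'_{kl}$ is tight at time $\tau'_{kl}$ in $A(k,l)$, i.e.\ $\omega^{kl}_{p'_{kl}}(\tau'_{kl}) = f_{p'_{kl}}$. The time in the original setting corresponding to $\tau'_{kl}$ is precisely $t'_{kl} = (\tau'_{kl} - \beta_{kl})/\alpha_{kl}$. Applying Lemma \ref{lm:loc:contrib-out} at $t = t'_{kl}$ (which is legitimate since $k\ge 1$) to the tangent $p'_{kl}$ then gives
\begin{equation}
\omega_{p'_{kl}}(t'_{kl}) \;\ge\; \omega^{kl}_{p'_{kl}}(\tau'_{kl}) \;=\; f_{p'_{kl}}.
\end{equation}

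Finally, because $\omega_{p'_{kl}}(\cdot)$ is continuous and nondecreasing in $t$ with $\omega_{p'_{kl}}(0)\le f_{p'_{kl}}$ (tangent $p'_{kl}$ is closed at the start of the step) and $\omega_{p'_{kl}}(t'_{kl})\ge f_{p'_{kl}}$, the intermediate value theorem yields some $t\le t'_{kl}$ with $\omega_{p'_{kl}}(t) = f_{p'_{kl}}$, i.e.\ $p'_{kl}$ becomes tight at or before $t'_{kl}$. I do not expect any real obstacle here beyond being careful that $t'_{kl}\ge 0$ (which follows since $\tau'_{kl}\ge \beta_{kl}$, as seen by noting that at $t=0$ the contribution $\omega_{p'_{kl}}(0)$ is at most $f_{p'_{kl}}$ and coincides with $\omega^{kl}_{p'_{kl}}(\beta_{kl})$ via Lemma \ref{lm:loc:contrib-in}) and that the monotonicity/continuity argument applies for an arbitrary fixed tangent, not only for the leftmost one.
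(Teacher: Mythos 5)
Your proposal is correct and follows essentially the same route as the paper: show $\omega^{kl}_{p'_{kl}}(\tau'_{kl}) = f_{p'_{kl}}$ in $A(k,l)$, transfer via Lemma \ref{lm:loc:contrib-out} to get $\omega_{p'_{kl}}(t'_{kl}) \ge f_{p'_{kl}}$, and conclude that $p'_{kl}$ is tight or over-tight at $t'_{kl}$, hence became tight at some $t \le t'_{kl}$. Your added monotonicity/continuity (intermediate value) argument and the remark about $t'_{kl} \ge 0$ merely make explicit what the paper's proof leaves implicit.
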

\begin{proof}
Since tangent $p'_{kl}$ is tight at time $\tau'_{kl}$ in $A(k,l)$, we have
$\omega_{p'_{kl}}^{kl}(\tau'_{kl}) = f_{p'_{kl}}$. By Lemma
\ref{lm:loc:contrib-out}, $\omega_{p'_{kl}}(t'_{kl}) \ge f_{p'_{kl}}$, which
means that $p'_{kl}$ is tight or over-tight at time $t'_{kl}$ in the
original setting. Therefore, $p'_{kl}$ becomes tight at a time $t\le
t'_{kl}$ in the original setting.
\end{proof}

We now obtain the main result of this section.
\begin{lemma}
\label{lm:loc:multi-sim}
Tangent $p^*$ becomes tight first in the original setting, at time $t^*$.
The quantities $p^*$ and $t^*$ can be computed in time $O(m^2)$.
\end{lemma}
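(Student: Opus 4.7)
The plan is to obtain both claims almost immediately from the preparatory work already in place. For correctness, I would simply combine Lemmas \ref{lm:loc:tang-ge} and \ref{lm:loc:tang-le}. Lemma \ref{lm:loc:tang-ge} shows that any tangent that becomes tight in the original setting does so at a time no earlier than $t^*$, so in particular no tangent can go tight before time $t^*$. Lemma \ref{lm:loc:tang-le}, applied to the pair $(k^*,l^*)$ that attains the minimum in the definition of $t^*$, shows that the specific tangent $p^* = p'_{k^*l^*}$ becomes tight at some time $t\le t'_{k^*l^*} = t^*$ in the original setting. Together these force the tightening time of $p^*$ in the original setting to equal $t^*$, and no other tangent becomes tight strictly earlier, so $p^*$ is first.

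For the $O(m^2)$ bound I would do a straightforward precomputation. First, sort the connected and unconnected customers by $v_i/d_i$ as described; this costs $O(m\log m)$ which is absorbed into $O(m^2)$. Next, in $O(m)$ time, precompute the prefix sums $\sum_{i=1}^{k} d_i$ and $\sum_{i=1}^{k} v_i$ over the unconnected customers, and the analogous prefix sums $\sum_{i=\mu+1}^{l} d_i$ and $\sum_{i=\mu+1}^{l} v_i$ over the connected customers. From these, for any pair $(k,l)$ with $1\le k\le \mu\le l\le m$, the quantities
\begin{equation*}
p'_{kl} = \sum_{i\in C(k,l)} d_i,
\qquad
\alpha_{kl} = \frac{\sum_{i=1}^{k} d_i}{\sum_{i\in C(k,l)} d_i},
\qquad
\beta_{kl} = \frac{\sum_{i\in C(k,l)} v_i}{\sum_{i\in C(k,l)} d_i}
\end{equation*}
can each be evaluated in $O(1)$. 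A single oracle call then gives $s_{p'_{kl}} = \phi'(p'_{kl})$ and $f_{p'_{kl}} = \phi(p'_{kl}) - p'_{kl}s_{p'_{kl}}$, from which $\tau'_{kl} = s_{p'_{kl}} + f_{p'_{kl}}/p'_{kl}$ and $t'_{kl} = (\tau'_{kl} - \beta_{kl})/\alpha_{kl}$ follow in $O(1)$. Iterating over the at most $\mu(m-\mu+1) = O(m^2)$ pairs $(k,l)$ and taking the pair minimizing $t'_{kl}$ therefore yields $t^*$ and $p^*$ in $O(m^2)$ total time.

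There is no real obstacle here; the technical work is all in Lemmas \ref{lm:loc:contrib-in}--\ref{lm:loc:tang-le}, which reduce the original multi-customer setting to the single-customer-rate alternate settings $A(k,l)$. The only minor subtlety to flag in the write-up is that the pair $(k,l) = (0, \mu)$ (and, more generally, any pair with $k=0$) is deliberately excluded from the minimization, because on $I_{0l}(t)$ no customer is contributing at a positive rate, so no tangent on that interval can become tight by increasing $t$; this is exactly why the hypothesis $k\ge 1$ is used in Lemmas \ref{lm:loc:contrib-in} and \ref{lm:loc:contrib-out}, and is consistent with the assumption at the start of the section that the increase in $t$ eventually causes some closed tangent to become tight.
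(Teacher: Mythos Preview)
Your proposal is correct and follows essentially the same approach as the paper: combine Lemmas~\ref{lm:loc:tang-ge} and~\ref{lm:loc:tang-le} for correctness, then sort and enumerate the $O(m^2)$ pairs $(k,l)$ to compute $t^*$ and $p^*$. Your prefix-sum description is slightly more explicit than the paper's, but the argument is the same.
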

\begin{proof}
Lemma \ref{lm:loc:tang-ge} implies that tangents become tight only at times
$t\ge t^*$. Lemma \ref{lm:loc:tang-le} implies that tangent $p^* =
p'_{\argmin\{ t'_{kl} : 1 \le k \le \mu \le l \le m\}}$ becomes tight at a
time $t\le \min\{ t'_{kl} : 1 \le k \le \mu \le l \le m\} = t^*$. Therefore,
tangent $p^*$ becomes tight first, at time $t^*$.

To evaluate the running time, note that the $d_i$ and $v_i$ can be sorted in
$O(m\log m)$ time. Once the $d_i$ and $v_i$ are sorted, we can compute all
quantities $\alpha_{kl}$ and $\beta_{kl}$ in $O(m^2)$, and then compute all
$t'_{kl}$ and $p'_{kl}$ in $O(m^2)$ via Lemma \ref{lm:loc:uni-sim}.
Therefore, the total running time is $O(m^2)$.
\end{proof}

In case of a tie, Lemma \ref{lm:loc:multi-sim} enables us to compute one of
the tangents that become tight first. It is possible to obtain additional
results about ties, starting with that of Lemma \ref{lm:loc:uni-sim}.
However, we do not need such results in this paper, as Algorithm
\textsc{FLPD}, as well as the algorithms in Sections \ref{sect:lots} and
\ref{sect:jrp}, allow us to break ties arbitrarily.

For many primal-dual algorithms, we can perform the computation in Lemma
\ref{lm:loc:multi-sim} faster than in $O(m^2)$, by taking into account the
details of how the algorithm increases the dual variables. We will
illustrate this with three algorithms in Sections \ref{sect:loc:multi-fac},
\ref{sect:lots}, and \ref{sect:jrp}.
\subsection{Other Rules for Changing the Dual Variables}
\label{sect:loc:other-rules}
In this section, we consider the same setting as in the previous one, but in
addition allow each customer $i$ to change its budget at an arbitrary rate
$\delta_i \ge 0$. The rate is no longer limited to the set $\{0, d_i\}$, and
we assume that at least one customer has $\delta_i > 0$. The following
results are not needed to obtain the algorithms in this paper. We include
them since they embody a more general version of our approach, and may be
useful in developing primal-dual algorithms in the future.

Consider the quantities $p_i(t)$ as defined in equation
\eqref{eq:loc:def-pi}. Since $\delta_i$ need not equal $d_i$, the order of
the $p_i(t)$ may change as $t$ increases from $0$ to $+\infty$. At any given
time $t$, the $p_i(t)$ divide $[0,+\infty)$ into at most $m+1$ intervals.
  For each set of customers $K \subseteq [m]$, we introduce an interval
\begin{equation}
I_K(t) = [a_K(t), b_K(t)) = \Bigl[ \max_{i\in K} p_i(t), \min_{i\not\in K}
    p_i(t) \Bigr).
\end{equation}
If $K=\emptyset$, we set $a_K(t) = 0$, and if $K=[m]$, we set $b_K(t) =
+\infty$. If $a_K(t) \ge b_K(t)$ or $a_K(t) = b_K(t) = +\infty$, we take
$I_K(t)$ to be empty. Note that $\bigcup_{K\subseteq [m]} I_K(t) = [0,
  +\infty)$, and $I_K(t) \cap I_L(t) = \emptyset$ for $K\neq L$. Any
  interval that is formed by the $p_i(t)$ as $t$ increases from $0$ to
  $+\infty$ is among the intervals $I_K(t)$. The set of customers
  contributing to tangents on an interval $I_K(t)$ is precisely $K$.

As in the previous section, for each interval $I_K(t)$ with $\sum_{i\in K}
\delta_i > 0$, we define an alternate setting $A(K)$, where all starting
budgets are zero, customers in $K$ increase their budgets at rates $d_i$,
and the remaining customers keep their budgets unchanged. We denote the
total contribution received by tangent $p$ at time $\tau_K$ in $A(K)$ by
$\omega_p^K(\tau_K)$. The correspondence between times $t$ in the original
setting and times $\tau_K$ in $A(K)$ is given by $\tau_K = \beta_K +
\alpha_K t$, with $\alpha_K = \sum_{i\in K} \delta_i / \sum_{i\in K} d_i$
and $\beta_K = \sum_{i\in K} v_i / \sum_{i\in K} d_i$. Since $\alpha_K > 0$,
the correspondence is one-to-one between times $t\in [0, +\infty)$ and
  $\tau_K \in [\beta_K, +\infty)$.
\begin{lemma}
\label{lm:loc:oth-r:contrib-in}
Given a time $t$ and an interval $I_K(t)$ with $\sum_{i\in K} \delta_i > 0$,
a tangent $p\in I_K(t)$ receives the same total contribution at time $t$ in
the original setting as at time $\tau_K$ in $A(K)$, that is $\omega_p(t) =
\omega_p^K(\tau_K)$.
\end{lemma}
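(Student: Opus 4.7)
The plan is to imitate the proof of Lemma~\ref{lm:loc:contrib-in}, but exploit the more flexible structure of the sets $K$ and intervals $I_K(t)$ in place of the pair $(k,l)$ and $C(k,l)$. The first step is to identify precisely which customers contribute to a tangent $p \in I_K(t)$. By the definition of $p_i(t)$ in \eqref{eq:loc:def-pi} together with the fact that $s_p$ is nonincreasing in $p$, customer $i$ contributes to tangent $p$ at time $t$ iff $p \ge p_i(t)$. Since $p \in I_K(t) = [\max_{i\in K} p_i(t),\ \min_{i\notin K} p_i(t))$, this holds exactly for $i \in K$, which matches the remark in the text that the contributing customers on $I_K(t)$ are precisely $K$.

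Next I would write
\begin{equation*}
\omega_p(t) = \sum_{i\in K} (v_i + t\delta_i - s_p d_i)
= \sum_{i\in K} v_i + t\sum_{i\in K} \delta_i - s_p \sum_{i\in K} d_i,
\end{equation*}
and factor out $\sum_{i\in K} d_i$. Using the definitions $\alpha_K = \sum_{i\in K}\delta_i / \sum_{i\in K} d_i$ and $\beta_K = \sum_{i\in K} v_i / \sum_{i\in K} d_i$, the expression simplifies to $(\beta_K + \alpha_K t - s_p)\sum_{i\in K} d_i = (\tau_K - s_p)\sum_{i\in K} d_i$. This uses that $\sum_{i\in K} \delta_i > 0$ (given) and $\sum_{i\in K} d_i > 0$, so $\alpha_K$ and $\beta_K$ are well defined.

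Since $\omega_p(t)$ is a sum of the nonnegative contributions $v_i + t\delta_i - s_p d_i$ over $i\in K$, we have $\omega_p(t)\ge 0$, and therefore $\tau_K \ge s_p$. In the alternate setting $A(K)$, all starting budgets are zero and only customers in $K$ change budget (at rates $d_i$), so $\omega_p^K(\tau_K) = \sum_{i\in K} \max\{0, \tau_K d_i - s_p d_i\}$. Combined with $\tau_K - s_p \ge 0$, this collapses to $(\tau_K - s_p) \sum_{i\in K} d_i$, which matches our expression for $\omega_p(t)$.

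The computation is essentially routine once the first step is in place, so the only point that requires care is the identification of the contributing set on $I_K(t)$; the main obstacle, such as it is, is being careful with the half-open endpoints in the definition of $I_K(t)$, for which the right-continuity of $s_p$ in $p$ (via the $\min$ in \eqref{eq:loc:def-pi}) ensures that the customers contributing to the left endpoint $\max_{i\in K} p_i(t)$ are indeed all of $K$ and no more.
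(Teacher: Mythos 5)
Your proof is correct and follows essentially the same route as the paper's: write $\omega_p(t)$ as a sum over $K$, substitute $\alpha_K,\beta_K$ to get $(\tau_K - s_p)\sum_{i\in K} d_i$, and identify this with $\omega_p^K(\tau_K)$. You are in fact slightly more careful than the paper's own proof of this lemma, which omits the step showing $\tau_K - s_p \ge 0$ (needed to collapse the $\max\{0,\cdot\}$ in the definition of $\omega_p^K$); that argument appears in the paper only in the proof of the earlier Lemma~\ref{lm:loc:contrib-in}, and you correctly carry it over here.
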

\begin{proof}
The total contribution in the original setting is
\begin{multline}
\qquad \omega_p(t) = \sum_{i\in K} (v_i + t\delta_i - s_pd_i) 
= \sum_{i\in K} (v_i + \alpha_K t d_i - s_p d_i) \\
= (\beta_K + \alpha_K t - s_p) \sum_{i\in K} d_i 
= (\tau_K - s_p) \sum_{i\in K} d_i = \omega_p^K(\tau_K). \qquad
\addtocounter{equation}{1}
\tag*{$\begin{matrix} (\theequation)\\ \qedhere \end{matrix}$}
\end{multline}
\end{proof}
\begin{lemma}
\label{lm:loc:oth-r:contrib-out}
Given a time $t$ and an interval $I_K(t)$ with $\sum_{i\in K} \delta_i > 0$,
a tangent $p$ receives at least as large a total contribution at time $t$ in
the original setting as at time $\tau_K$ in $A(K)$, that is $\omega_p(t) \ge
\omega_p^K(\tau_K)$.
\end{lemma}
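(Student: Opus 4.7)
The plan is to mirror the structure of the proof of Lemma~\ref{lm:loc:contrib-out}, but to exploit the cleaner parametrization of the partition by subsets $K\subseteq[m]$ rather than by index pairs $(k,l)$. As before, I first dispose of the degenerate case. If $\tau_K - s_p < 0$, then every term $\max\{0,\tau_K d_i - s_p d_i\}$ vanishes, so $\omega_p^K(\tau_K) = 0$, and the conclusion is immediate since $\omega_p(t)\ge 0$ always.

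So assume $\tau_K \ge s_p$. Re-using the computation inside the proof of Lemma~\ref{lm:loc:oth-r:contrib-in} (which only requires $\sum_{i\in K}\delta_i>0$ and does not use $p\in I_K(t)$), I can rewrite
\[
\omega_p^K(\tau_K) = (\tau_K - s_p)\sum_{i\in K} d_i = \sum_{i\in K}(v_i + t\delta_i - s_p d_i).
\]
Next, let $L\subseteq[m]$ be the unique subset with $p\in I_L(t)$ in the original setting; such an $L$ exists because the $I_L(t)$ partition $[0,+\infty)$. By the definition of $I_L(t)$, customer $i$ satisfies $v_i + t\delta_i - s_p d_i \ge 0$ precisely when $i\in L$, i.e., $L$ is exactly the set of customers contributing to tangent $p$ at time $t$. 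Hence
\[
\omega_p(t) = \sum_{i\in L}(v_i + t\delta_i - s_p d_i).
\]

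Subtracting and cancelling the terms indexed by $K\cap L$ gives
\[
\omega_p(t) - \omega_p^K(\tau_K)
= \sum_{i\in L\setminus K}(v_i + t\delta_i - s_p d_i) \;-\; \sum_{i\in K\setminus L}(v_i + t\delta_i - s_p d_i).
\]
For $i\in L\setminus K$, the summand is nonnegative because $i\in L$ means $i$ contributes to $p$. For $i\in K\setminus L$, the summand is nonpositive because $i\notin L$ means $i$ does not contribute to $p$; the leading minus sign makes each such term a nonnegative contribution to the difference. Therefore $\omega_p(t) - \omega_p^K(\tau_K) \ge 0$, which is the claim.

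The only place where care is needed — and what I would flag as the ``main obstacle,'' though it is not really technical — is the identification of $L$ as both the index of the interval containing $p$ and the true set of contributors to $p$ in the original setting. This equivalence rests on monotonicity of $s_p$ in $p$ (hence the ``contributes to $p$'' relation is the same as ``$p\ge p_i(t)$'') together with the explicit form $I_L(t)=[\max_{i\in L}p_i(t),\min_{i\notin L}p_i(t))$ from the definition. Once this is in hand, the rest is bookkeeping, exactly as in Lemma~\ref{lm:loc:contrib-out}, and the four-case analysis there collapses into the two clean sums above because the set partition now directly encodes the contributor sets.
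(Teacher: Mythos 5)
Your proof is correct and takes essentially the same route as the paper's: handle the degenerate case $\tau_K - s_p < 0$ directly, then rewrite $\omega_p^K(\tau_K)$ as $\sum_{i\in K}(v_i+t\delta_i-s_p d_i)$, identify the interval $I_L(t)$ containing $p$ so that $\omega_p(t)=\sum_{i\in L}(v_i+t\delta_i-s_p d_i)$, and conclude by comparing the two sums over $L\setminus K$ and $K\setminus L$. The extra remarks you add about why $L$ is the contributor set are consistent with (and implicit in) the paper's argument.
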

\begin{proof}
If $\tau_K - s_p < 0$, then $\omega_p(t) \ge \omega_p^K(\tau_K)$. If $\tau_K
- s_p\ge 0$, let $p\in I_L(t)$ for some $L\subseteq [m]$, and note that
$\omega_p^K(\tau_K) = (\tau_K - s_p) \sum_{i\in K} d_i = \sum_{i\in K} (v_i
+ t\delta_i - s_p d_i)$, while $\omega_p(t) = \sum_{i\in L} (v_i + t\delta_i
- s_p d_i)$.

The difference between the two contributions is
\begin{equation}
\omega_p(t) - \omega_p^K(\tau_K) = \sum_{i\in L\setminus K} (v_i + t\delta_i
- s_p d_i) - \sum_{i\in K\setminus L} (v_i + t\delta_i - s_p d_i).
\end{equation}
Since $p\in I_L(t)$, in the original setting, customers in $L$ contribute to
tangent $p$ at time $t$, and therefore $v_i + t \delta_i - s_p d_i \ge 0$
for $i\in L$, which implies that $\sum_{i\in L\setminus K} (v_i + t\delta_i
- s_p d_i) \ge 0$. Conversely, customers not in $L$ do not contribute to $p$
at time $t$, implying that $v_i + t\delta_i - s_p d_i \le 0$ for $i\not\in
L$, and therefore $\sum_{i\in K\setminus L} (v_i + t\delta_i s_p d_i) \le
0$. As a result, $\omega_p(t) - \omega_p^K(\tau_K) \ge 0$.
\end{proof}
Unlike in the previous section, we have exponentially many alternative
settings $A(K)$. The following derivations will enable us to compute the
first tangent to become tight in the original setting, and the time when
this occurs using only a polynomial number of alternative settings. 

As $t$ increases from $0$ to $+\infty$, the order of the quantities $(v_i +
t\delta_i)/d_i$ may change. Since the quantities are linear in $t$, as
$t\to+\infty$, they assume an order that no longer changes. We use this
order to define a permutation $\pi(+\infty) = (\pi_1(+\infty), \dots,
\pi_m(+\infty))$, with $\pi_i(+\infty) = j$ meaning that $(v_j +
t\delta_j)d_j$ is the $i$-th largest quantity. If two quantities are tied as
$t\to+\infty$, we break the tie arbitrarily. Similarly, for any time $t\in
[0,+\infty)$, we define a permutation $\pi(t) = (\pi_1(t), \dots,
  \pi_m(t))$. In this case, if two quantities are tied, we break the tie
  according to $\pi(+\infty)$. For example, suppose that the two largest
  quantities at time $t$ are tied, that they are $(v_1 + t\delta_1)/d_1 =
  (v_2 + t\delta_2)/d_2$, and that $\pi_i(+\infty) = 1$ and $\pi_j(+\infty)
  = 2$ with $i < j$. Then we take $\pi_1(t) = 1$ and $\pi_2(t) = 2$.

Compare two quantities
\begin{equation}
(v_i + t \delta_i) / d_i \quad \text{vs.} \quad (v_j + t \delta_j) / d_j.
\end{equation}
If their order changes as $t$ increases from $0$ to $+\infty$, then there is
a $\theta>0$ such that the sign between the quantities is `$<$' on
$[0,\theta)$, `$=$' at $\theta$, and `$>$' on $(\theta,+\infty)$, or
  vice-versa. Let $\theta_1 < \dots < \theta_R$ be all such times when the
  sign between two quantities changes, and let $\theta_0 = 0$ and
  $\theta_{R+1}=+\infty$. Since there are $m(m-1)/2$ pairs of quantities,
  $R\le m(m-1)/2$.

The proof of the following lemma follows from these definitions.
\begin{lemma}
As $t$ increases from $0$ to $+\infty$, the permutation $\pi(t)$ changes at
times $\theta_1, \dots, \theta_R$. Moreover, $\pi(t)$ is unchanged on the
intervals $[\theta_r, \theta_{r+1})$ for $r=0,\dots,R$.
\end{lemma}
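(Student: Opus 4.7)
The plan is to establish two facts in sequence: first, that $\pi(t)$ is constant on each open subinterval $(\theta_r, \theta_{r+1})$; second, that the left endpoint $\theta_r$ takes the same value, so $\pi(t)$ is constant on all of $[\theta_r, \theta_{r+1})$. Together these imply that $\pi$ can only differ between adjacent intervals, i.e., that its changes occur exactly at the times $\theta_1, \dots, \theta_R$.

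For the first fact, I would observe that for any pair $(i,j)$ the difference $(v_i + t\delta_i)/d_i - (v_j + t\delta_j)/d_j$ is linear in $t$. A nonzero affine function has at most one root, so the sign of this difference is either identically zero (the two linear functions coincide) or strictly positive on one side of a unique root and strictly negative on the other. By construction, every such root is one of the $\theta_r$, so on the open interval $(\theta_r, \theta_{r+1})$ no sign flip occurs for any pair. Pairs with strictly ordered differences keep the same strict order throughout $(\theta_r, \theta_{r+1})$, and pairs whose linear functions coincide are always tied and always resolved by $\pi(+\infty)$. Hence $\pi(t)$ is identical at every $t \in (\theta_r, \theta_{r+1})$.

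For the second fact, I need to match $\pi(\theta_r)$ against this common value. I would split into three cases for each pair $(i,j)$. If the pair is strictly ordered at $\theta_r$, the linearity argument above shows the same strict ordering throughout $[\theta_r, \theta_{r+1})$. If the two lines coincide, the pair is tied at every time, including at $\theta_r$ and on $(\theta_r, \theta_{r+1})$, and is resolved identically by $\pi(+\infty)$. The delicate case is when $(i,j)$ is tied at $\theta_r$ because $\theta_r$ is its unique crossing time. On the entire ray $(\theta_r, +\infty)$ the pair has a single strict order (linear functions cross at most once), so that strict order is exactly the one assumed by $\pi(+\infty)$. Since the tie-breaking rule at $\theta_r$ appeals to $\pi(+\infty)$, the ordering placed on $(i,j)$ at $\theta_r$ agrees with the ordering on $(\theta_r, \theta_{r+1})$. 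Transitivity of the resulting resolution (applied pair-by-pair) ensures that $\pi(\theta_r)$ is a well-defined total order even when many pairs are tied simultaneously.

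The main obstacle is the delicate third case above: I must verify that the tie-breaking convention at $\theta_r$ is consistent with the strict order that emerges immediately to the right of $\theta_r$. Once this alignment is established, combining the two facts yields that $\pi(t)$ is constant on $[\theta_r, \theta_{r+1})$ for each $r = 0, \dots, R$, and the claim that $\pi(t)$ changes only at $\theta_1, \dots, \theta_R$ follows at once.
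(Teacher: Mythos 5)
Your proposal is correct and is essentially the argument the paper has in mind: the paper states that the lemma ``follows from these definitions,'' and your write-up is the natural unpacking of why — affine pairwise differences can flip sign only at some $\theta_r$, and the $\pi(+\infty)$ tie-breaking rule at $\theta_r$ agrees with the strict order that holds just to its right, so $\pi$ is constant on each half-open interval $[\theta_r,\theta_{r+1})$.
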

We now bound the number of intervals $I_K(t)$ that ever become nonempty. Let
$\mathcal{K}(t) = \{\{ \pi_1(t), \dots, \pi_i(t) \} : i=0,\dots,m\}$ and
$\mathcal{K} = \cup_{r=0}^R \mathcal{K}(\theta_r)$, and note that
$|\mathcal{K}(t)| \le m+1$ and $|\mathcal{K}| \le (m + 1)(m(m - 1)/2 + 1) =
O(m^3)$.
\begin{lemma}
As $t$ increases from $0$ to $+\infty$, only intervals $I_K(t)$ with $K\in
\mathcal{K}$ ever become nonempty, that is $\{ K : \exists t\ge 0 \text{ \rm
  s.t. } I_K(t)\neq \emptyset\} \subseteq \mathcal{K}$.
\end{lemma}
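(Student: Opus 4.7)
The plan is to reduce this to the order-theoretic claim that, at any time $t$, every $K \subseteq [m]$ with $I_K(t) \neq \emptyset$ is a prefix of the permutation $\pi(t)$, i.e., $K \in \mathcal{K}(t)$. Once that is in hand, the preceding lemma supplies an index $r$ with $\pi(t) = \pi(\theta_r)$, hence $\mathcal{K}(t) = \mathcal{K}(\theta_r) \subseteq \mathcal{K}$, and the conclusion $\{K : \exists\, t\ge 0 \text{ s.t. } I_K(t)\neq\emptyset\} \subseteq \mathcal{K}$ is immediate.

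To establish the prefix claim, I would fix $t\ge 0$ and $K \subseteq [m]$ with $I_K(t) = [\max_{i\in K} p_i(t), \min_{j\notin K} p_j(t))$ nonempty. The boundary cases $K=\emptyset$ and $K=[m]$ are trivial since $\emptyset$ and $[m]$ belong to every $\mathcal{K}(t)$, so assume both $K$ and its complement are nonempty. Nonemptiness of $I_K(t)$ forces the strict inequality $\max_{i\in K} p_i(t) < \min_{j\notin K} p_j(t)$, and hence $p_i(t) < p_j(t)$ for every $i\in K$ and every $j\notin K$. Applying the contrapositive of the second and third implications of property (\ref{eq:loc:monot-pi}), the strict inequality $p_i(t) < p_j(t)$ rules out both $(v_i + t\delta_i)/d_i = (v_j + t\delta_j)/d_j$ and $(v_i + t\delta_i)/d_i < (v_j + t\delta_j)/d_j$; so the only remaining case is $(v_i + t\delta_i)/d_i > (v_j + t\delta_j)/d_j$. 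Thus every customer in $K$ has a strictly larger budget-to-demand ratio at time $t$ than every customer outside $K$, and in the decreasing ordering $\pi(t)$ the elements of $K$ occupy precisely the first $|K|$ positions, giving $K = \{\pi_1(t), \dots, \pi_{|K|}(t)\} \in \mathcal{K}(t)$.

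The only subtlety worth flagging is the tie-breaking convention used to define $\pi(t)$: a priori one might worry that an arbitrary resolution of ties could place an element of $K$ and an element of $[m]\setminus K$ in the same ``slot'' and so misidentify the prefix. However, the strict inequality we just derived at the cut between $K$ and $[m]\setminus K$ means no tie occurs across that boundary, so the identity $K = \{\pi_1(t),\dots,\pi_{|K|}(t)\}$ holds irrespective of how ties internal to $K$ or internal to $[m]\setminus K$ are broken. With the prefix claim secured, the one-line invocation of the previous lemma finishes the proof; I do not anticipate any real obstacle beyond carefully unpacking the definitions of $I_K(t)$, $\pi(t)$, and $\mathcal{K}$.
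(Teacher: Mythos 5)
Your proof is correct and follows essentially the same route as the paper: you show that any nonempty $I_K(t)$ forces $K$ to be a prefix of $\pi(t)$ (the paper phrases this via the sorted chain $p_{\pi_1(t)}(t)\le\dots\le p_{\pi_m(t)}(t)$ implied by property \eqref{eq:loc:monot-pi}), and then invoke the preceding lemma that $\pi(t)$ is constant on each $[\theta_r,\theta_{r+1})$ to conclude $\mathcal{K}(t)\subseteq\mathcal{K}$. Your explicit handling of the strict inequality across the $K$ boundary and of the tie-breaking convention is a more detailed unpacking of the same argument, not a different approach.
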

\begin{proof}
Fix a time $t$, and note that by property \eqref{eq:loc:monot-pi}, we have
$p_{\pi_1(t)}(t) \le p_{\pi_2(t)}(t) \le \dots \le p_{\pi_m(t)}(t)$.
Therefore, the intervals $I_K(t)$ may be nonempty only when $K \in
\mathcal{K}(t)$. Since $\pi(t)$ is unchanged on the intervals $[\theta_r,
  \theta_{r+1})$ for $r=0,\dots,R$, if an interval $I_K(t)$ ever becomes
  nonempty, then $K\in \mathcal{K}$.
\end{proof}
As in the previous section, when $\sum_{i\in K} \delta_i > 0$, we can
compute the first tangent to become tight in $A(K)$, and the time when this
occurs using Lemma \ref{lm:loc:uni-sim}. Let the computed tangent and time
be $p'_K = \sum_{i\in K} d_i$ and $\tau'_K = s_{p'_K} +
\frac{f_{p'_K}}{p'_K}$, and let $t'_K = \frac{\tau'_K - \beta_K}{\alpha_K}$
be the time in the original setting corresponding to time $\tau'_K$ in
$A(K)$. Next, we show that tangent $p^* = p'_{\argmin\left\{t'_K :
  \sum_{i\in K} \delta_i > 0, K\in \mathcal{K} \right\}}$ becomes tight
first, at time $t^* = \min\bigl\{ t'_K : \sum_{i\in K} \delta_i > 0, K\in
\mathcal{K} \bigr\}$.
\begin{lemma}
\label{lm:loc:oth-r:tang-ge}
If a tangent $p$ becomes tight at a time $t$ in the original setting, then
$t\ge t^*$.
\end{lemma}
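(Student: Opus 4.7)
The plan is to imitate the proof of Lemma~\ref{lm:loc:tang-ge}, substituting the partition $\{I_K(t) : K\subseteq [m]\}$ and the alternate settings $A(K)$ introduced in this subsection for the more restrictive partition $\{I_{kl}(t)\}$ and settings $A(k,l)$ used before. Since $\bigcup_{K\subseteq [m]} I_K(t) = [0,+\infty)$, the tangent $p$ that becomes tight at time $t$ must lie in some interval $I_L(t)$, and the set of customers contributing to $p$ at time $t$ is exactly $L$. The first step of the proof will be to exhibit this $L$.

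Next, I would argue that without loss of generality $\sum_{i\in L}\delta_i>0$. If instead $\sum_{i\in L}\delta_i = 0$, then $\omega_p(\cdot)$ is constant throughout the time interval during which $p$ remains in $I_L$, so $p$ must already have been tight at the earlier moment $t_0 \le t$ when $p$ entered $I_L$ (or, if that is $t=0$, then $p$ was tight before the step began, contradicting our standing assumption in Section~\ref{sect:loc:single-fac} that no closed tangent is tight initially). Replacing $t$ by $t_0$ and updating $L$ to the interval that $p$ enters from, we may assume the set of contributors has positive total rate, so that the alternate setting $A(L)$ is well defined.

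With $\sum_{i\in L}\delta_i>0$ in hand, the heart of the proof is a single application of Lemma~\ref{lm:loc:oth-r:contrib-in}: since $p\in I_L(t)$, we have $\omega_p^L(\tau_L) = \omega_p(t) = f_p$, so $p$ is tight at time $\tau_L$ in $A(L)$. By Lemma~\ref{lm:loc:uni-sim} applied to $A(L)$, no tangent can become tight before $\tau'_L$, and hence $\tau_L \ge \tau'_L$. Converting times back via the affine change of variable $\tau_L = \beta_L + \alpha_L t$ (and using $\alpha_L>0$) gives $t \ge t'_L$. Finally, because $I_L(t)\neq\emptyset$ we have $L\in\mathcal{K}$ by the preceding lemma, so $L$ is among the sets indexing the minimum defining $t^*$, yielding $t\ge t'_L\ge t^*$.

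The only non-routine point is the edge case $\sum_{i\in L}\delta_i=0$, which I expect to be the main (minor) obstacle: one has to be careful that ``becomes tight'' refers to the first moment the contribution reaches $f_p$, so that by rolling back to the instant $p$ first entered an interval with nonzero total rate, we land in a situation where Lemma~\ref{lm:loc:oth-r:contrib-in} directly applies. Everything else is a direct transcription of the earlier argument into the more general framework.
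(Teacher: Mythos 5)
Your proposal is correct and follows essentially the same route as the paper: locate the interval $I_L(t)$ containing $p$, apply Lemma~\ref{lm:loc:oth-r:contrib-in} to conclude $p$ is tight at time $\tau_L$ in $A(L)$, deduce $\tau_L \ge \tau'_L$ and hence $t \ge t'_L \ge t^*$ using $L \in \mathcal{K}$. The edge case you treat by rolling back in time is handled in the paper by the one-line observation that, since $p$ \emph{becomes} tight at $t$, its contribution must be increasing, forcing $\sum_{i\in L}\delta_i>0$; the substance is the same.
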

\begin{proof}
Let $I_K(t)$ be the interval that contains $p$. Since this interval is
nonempty, $K\in \mathcal{K}$, and since the contribution to $p$ must be
increasing over time, $\sum_{i\in K} \delta_i > 0$.

Tangent $p$ is tight at time $t$ in the original setting, and therefore
$\omega_p(t) = f_p$. By Lemma \ref{lm:loc:oth-r:contrib-in},
$\omega_p^K(\tau_K) = f_p$, and hence $p$ is tight at time $\tau_K$ in
$A(K)$. It follows that $\tau_K \ge \tau'_K$, and therefore $t \ge t'_K \ge
t^*$.
\end{proof}
\begin{lemma}
\label{lm:loc:oth-r:tang-le}
Each tangent $p'_K$ with $\sum_{i\in K} \delta_i > 0$ becomes tight at a
time $t\le t'_K$ in the original setting.
\end{lemma}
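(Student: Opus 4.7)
The plan is to mirror the argument of Lemma~\ref{lm:loc:tang-le} in this more general setting with arbitrary budget rates, using Lemma~\ref{lm:loc:oth-r:contrib-out} as the transfer tool from the alternate setting $A(K)$ back to the original setting. Essentially all the work has already been done; the statement is just the right-hand side companion to Lemma~\ref{lm:loc:oth-r:tang-ge}.

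First, I would unwind the definitions. By choice of $p'_K$ and $\tau'_K$ via Lemma~\ref{lm:loc:uni-sim}, tangent $p'_K$ is tight at time $\tau'_K$ in $A(K)$, so $\omega_{p'_K}^K(\tau'_K) = f_{p'_K}$. The hypothesis $\sum_{i\in K} \delta_i > 0$ guarantees that $\alpha_K > 0$, so the affine correspondence $\tau_K = \beta_K + \alpha_K t$ between times in the two settings is well defined and invertible, and $t'_K = (\tau'_K - \beta_K)/\alpha_K$ is precisely the time in the original setting for which $\tau_K = \tau'_K$.

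Second, I would apply Lemma~\ref{lm:loc:oth-r:contrib-out} to the fixed tangent $p = p'_K$ at time $t = t'_K$. Since that lemma requires only $\sum_{i\in K} \delta_i > 0$, which holds by assumption, it yields
\begin{equation}
\omega_{p'_K}(t'_K) \;\ge\; \omega_{p'_K}^K(\tau'_K) \;=\; f_{p'_K}.
\end{equation}
Thus $p'_K$ is tight or over-tight at time $t'_K$ in the original setting. Since tangent contributions are nondecreasing in $t$ (budgets only increase) and start at $0$, there must be some earliest time $t\le t'_K$ at which $\omega_{p'_K}$ first reaches $f_{p'_K}$; that is the time at which $p'_K$ becomes tight.

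I do not expect any real obstacle here: the two ingredients are the definition of $(p'_K,\tau'_K,t'_K)$ and the one-sided comparison of Lemma~\ref{lm:loc:oth-r:contrib-out}. The only thing worth flagging is that the conclusion is about the first moment $p'_K$ becomes tight, not merely that it is tight at $t'_K$; this follows from monotonicity of $\omega_{p'_K}(\cdot)$ in the original setting, which is immediate from $\delta_i \ge 0$ and the formula $\omega_p(t) = \sum_{i \in L(t)}(v_i + t\delta_i - s_p d_i)$.
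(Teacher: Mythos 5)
Your proposal is correct and matches the paper's own proof: both apply Lemma~\ref{lm:loc:oth-r:contrib-out} to the tangent $p'_K$ at time $t'_K$ to get $\omega_{p'_K}(t'_K) \ge \omega_{p'_K}^K(\tau'_K) = f_{p'_K}$, and then conclude that $p'_K$ becomes tight at some time $t \le t'_K$. Your extra remark about monotonicity of $\omega_{p'_K}(\cdot)$ just makes explicit a step the paper leaves implicit.
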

\begin{proof}
Since $p'_K$ is tight at time $\tau'_K$ in $A(K)$, we have
$\omega_{p'_K}^K(\tau'_K) = f_{p'_K}$. By Lemma
\ref{lm:loc:oth-r:contrib-out}, $\omega_{p'_K}(t'_K) \ge f_{p'_K}$, which
means that $p'_K$ is tight or over-tight at time $t'_K$ in the original
setting. Therefore, $p'_K$ becomes tight at a time $t\le t'_K$ in the
original setting.
\end{proof}
\begin{lemma}
\label{lm:loc:oth-r:multi-sim}
Tangent $p^*$ becomes tight first in the original setting, at time $t^*$.
The quantities $p^*$ and $t^*$ can be computed in time $O(m^3)$.
\end{lemma}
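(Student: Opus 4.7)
The plan is to follow the template established in the proof of Lemma \ref{lm:loc:multi-sim}. The correctness part of the statement is essentially a direct consequence of the two preceding lemmas: Lemma \ref{lm:loc:oth-r:tang-ge} rules out any tangent becoming tight strictly before time $t^*$, while Lemma \ref{lm:loc:oth-r:tang-le} applied to the specific set $K^*$ achieving the minimum in the definition of $t^*$ shows that the tangent $p^* = p'_{K^*}$ becomes tight at some time $t \le t'_{K^*} = t^*$. Combining the two yields that $p^*$ becomes tight first, precisely at time $t^*$.

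The substantive part is the $O(m^3)$ running time bound, and here the key constraint is the result immediately preceding the lemma, which tells us we only need to consider sets $K \in \mathcal{K}$, where $|\mathcal{K}| = O(m^3)$. The plan is to compute all crossing times $\theta_1 < \dots < \theta_R$ where the order of the quantities $(v_i + t\delta_i)/d_i$ changes. Each pair of customers contributes at most one crossing, so $R \le m(m-1)/2$, and computing and sorting the crossings costs $O(m^2 \log m) = O(m^3)$. Starting from the permutation $\pi(0)$ (obtained by an initial sort) and processing the $\theta_r$ in order, each crossing swaps two adjacent elements of $\pi$, yielding $\pi(\theta_r)$ in $O(1)$ per update.

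For each $r$, the sets in $\mathcal{K}(\theta_r)$ are precisely the $m+1$ prefixes of $\pi(\theta_r)$; these are nested, so the three running sums $\sum_{i \in K} \delta_i$, $\sum_{i \in K} d_i$, and $\sum_{i \in K} v_i$ needed to form $\alpha_K$ and $\beta_K$ can be maintained incrementally in $O(1)$ per prefix. For each such $K$ with $\sum_{i\in K}\delta_i>0$, Lemma \ref{lm:loc:uni-sim} (applied to the alternate setting $A(K)$, which has zero starting budgets) gives $p'_K = \sum_{i\in K} d_i$ and $\tau'_K = s_{p'_K} + f_{p'_K}/p'_K$ in $O(1)$ time from the oracle for $\phi$, after which $t'_K = (\tau'_K - \beta_K)/\alpha_K$ is also $O(1)$. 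This is $O(m)$ work per $r$ and thus $O(mR) = O(m^3)$ total. Taking the minimum of the $t'_K$ over $K \in \mathcal{K}$ identifies $p^*$ and $t^*$ within the same budget.

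The main conceptual obstacle has already been handled by the preceding enumeration lemma, which replaces the exponential family of intervals $I_K(t)$ with the polynomially-sized $\mathcal{K}$; without that structural result the alternate-setting machinery would give no useful algorithm. The only potential pitfall in the plan is ensuring that ties among the quantities $(v_i + t\delta_i)/d_i$ at a crossing time are resolved consistently when building $\pi(\theta_r)$, which the definition of $\pi(t)$ as given via the tiebreaker $\pi(+\infty)$ already handles; no further care is needed.
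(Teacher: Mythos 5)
Your proposal is correct and follows essentially the same route as the paper: correctness from combining Lemmas \ref{lm:loc:oth-r:tang-ge} and \ref{lm:loc:oth-r:tang-le}, and the $O(m^3)$ bound by computing and sorting the crossing times $\theta_r$, updating the permutation incrementally, and evaluating $p'_K$, $t'_K$ for the $O(m)$ prefix sets in $\mathcal{K}(\theta_r)$ at each crossing. The only cosmetic difference is that you charge $O(1)$ per adjacent swap where the paper states $O(m)$ amortized per $\theta_r$ (relevant only when several pairs cross simultaneously), which does not affect the bound.
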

\begin{proof}
By Lemma \ref{lm:loc:oth-r:tang-ge}, tangents only become tight at times
$t\ge t^*$, while by Lemma \ref{lm:loc:oth-r:tang-le}, $p^* =
p'_{\argmin\left\{ t'_K : \sum_{i\in K} \delta_i > 0, K\in \mathcal{K}
  \right\}}$ becomes tight at a time $t\le \min\bigl\{ t'_K : \sum_{i\in K}
\delta_i > 0, K\in \mathcal{K} \bigr\} = t^*$. Therefore, $p^*$ becomes
tight first, at time $t^*$.

Concerning the running time, note that the $\theta_r$ can be computed in
$O(m^2)$ and sorted in $O(m^2\log m)$ time. The permutations $\pi(+\infty)$
and $\pi(0)$ can be computed in $O(m\log m)$ time. Processing the $\theta_r$
in increasing order, we can compute each $\pi(\theta_r)$ in $O(m)$ time
amortized over all $\theta_r$. Computing $p'_K$ and $t'_K$ for all $K \in
\mathcal{K}(\theta_r)$ takes $O(m)$ time. Therefore, the total running time
is $O(m^3)$.
\end{proof}
The results in this section can be generalized further to allow the rates
$\delta_i$ to be negative.
\subsection{Analysis of Multiple Facilities}
\label{sect:loc:multi-fac}
We now show how to execute Algorithm \textsc{FLPD} implicitly when problem
\eqref{mp:loc:conc} has multiple facilities. In Section
\ref{sect:loc:single-fac}, in addition to assuming the presence of only one
facility, we assumed that the connection costs $c_{ij}$ were $0$. We remove
this assumptions as well.

In this section, we continue to refer to facilities of infinitely-sized
problem \eqref{ip:loc:pwl-inf} as tangents, and reserve the term facility
for facilities of concave cost problem \eqref{mp:loc:conc}. We say that
customer $i$ contributes to concave cost facility $j$ if $v_i \ge c_{ij}$.
We distinguish between when a customer contributes to a concave cost
facility $j$ and when the customer contributes to a tangent $p$ belonging to
concave cost facility $j$.

When executing Algorithm \textsc{FLPD} implicitly, the input consists of
$m$, $n$, the connection costs $c_{ij}$, the demands $d_i$, and the cost
functions $\phi_j$, given by an oracle. As intermediate variables, we
maintain the time $t$, and the vectors $v$, $x$, and $y$. For $x$ and $y$,
we maintain only the non-zero entries. The algorithm returns $v$, $x$, and
$y$. We also maintain standard data structures to manipulate these
quantities as necessary. Note that we do not maintain nor return the vector
$w$, as any one of its entries can be computed through the invariant
$w_{ijp} = \max\{0, v_i - (c_{ij} + s_{jp})d_i\}$.

Clearly, step \eqref{alg:flpd:start} can be executed in polynomial time. In
order to use induction, suppose that we have executed at most $m-1$
iterations of loop \eqref{alg:flpd:while} so far. Since the algorithm opens
at most one tangent at each iteration, at any point at most $m-1$ tangents
are open. To analyze step \eqref{alg:flpd:comp_t}, we consider three events
that may occur as this step is executed:
\begin{enumerate}
\item A closed tangent becomes tight.
\item An unconnected customer begins contributing to an open tangent.
\item An unconnected customer begins contributing to a facility. 
\end{enumerate}
When step \eqref{alg:flpd:comp_t} is executed, the time $t$ stops increasing
when event 1 or 2 occurs. For the purpose of analyzing this step, we assume
that $t$ increases to $+\infty$ and that $v_i$ for unconnected customers are
increased so as to maintain $v_i = td_i$.
\begin{lemma}
\label{lm:loc:min_comp}
Suppose that event $e$ at facility $j$ is the first to occur after the
beginning of step \eqref{alg:flpd:comp_t}. Then we can compute the time $t'$
when this event occurs in polynomial time.
\end{lemma}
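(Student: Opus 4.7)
The plan is to analyze the three possible event types separately, showing that for each type the earliest occurrence time at facility $j$ can be computed in polynomial time; the overall $t'$ is then the minimum across types. A key observation is that since event $e$ at facility $j$ is assumed to be the first event overall, neither the set of globally connected customers, nor the set of globally unconnected ones, nor the set of customers currently contributing to facility $j$, changes during step \eqref{alg:flpd:comp_t} before $e$. This stability lets me treat the budget-update rates $\delta_i$ and the contributor set at $j$ as fixed for the duration of the analysis.

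For event type 1 (a closed tangent of $j$ becomes tight), I would isolate facility $j$ and shift budgets via $\tilde v_i = v_i - c_{ij} d_i$, so that for any customer currently contributing to $j$ the dual slack becomes $w_{ijp} = \max\{0, \tilde v_i - s_{jp} d_i\}$, exactly matching the form used in Section \ref{sect:loc:single-fac}; customers with $\tilde v_i \le 0$ contribute to no tangent of $j$ before $e$ and may be discarded. Setting $\delta_i = d_i$ for globally unconnected contributors and $\delta_i = 0$ for globally connected ones, Lemma \ref{lm:loc:multi-sim} applied to facility $j$ then yields both the first tangent to become tight and the time $t'$ in $O(m^2)$.

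For event type 2 (an unconnected customer begins contributing to an open tangent of $j$), the invariant $v_i = t d_i$ implies that a globally unconnected customer $i$ starts contributing to an open tangent $(j, p')$ at exactly $t = c_{ij} + s_{j p'}$. Since at most $m - 1$ tangents are open at any time across the whole algorithm, taking the minimum over all relevant (customer, open tangent) pairs at facility $j$ runs in $O(m^2)$. For event type 3 (an unconnected customer begins contributing to facility $j$), the same invariant gives the earliest time as $t = c_{ij} + \underline{s}_j$ with $\underline{s}_j = \lim_{p \to \infty} s_{jp}$, a quantity well-defined and finite because $s_{jp}$ is nonincreasing and nonnegative in $p$, and obtainable from the cost oracle. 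Minimizing over the globally unconnected customers not yet contributing to $j$ runs in $O(m)$.

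The main obstacle is event type 1, which requires the full machinery of Section \ref{sect:loc:single-fac}. The pivotal step there is the budget shift $\tilde v_i = v_i - c_{ij} d_i$: it reduces the analysis at facility $j$ to a single-facility instance with zero connection costs, letting me invoke Lemma \ref{lm:loc:multi-sim} verbatim. Event types 2 and 3 are essentially one-line calculations once the invariant $v_i = t d_i$ is recalled, so the combined running time is dominated by the $O(m^2)$ cost of handling event type 1 at each of the $n$ facilities.
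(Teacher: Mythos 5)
Your overall structure and the key step for event 1 --- isolating facility $j$, shifting budgets by $c_{ij}d_i$ for the customers currently contributing to $j$ (justified because, $e$ being the first event, no contributor set changes before $t'$), and then invoking Lemma \ref{lm:loc:multi-sim} with rates $d_i$ for unconnected and $0$ for connected customers --- is exactly the paper's proof, and your event 2 treatment (minimize $c_{ij}+s_{jp'}$ over unconnected customers and open tangents of $j$) also coincides with the paper's.

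The flaw is in event 3. In Section \ref{sect:loc:multi-fac} the paper defines customer $i$ to contribute to the concave cost facility $j$ as soon as its budget reaches the connection cost, i.e. $v_i \ge c_{ij}d_i$; with the invariant $v_i = t d_i$, event 3 at facility $j$ therefore occurs at time $\min\{c_{ij}\}$ over unconnected customers not yet contributing to $j$, which is also what the subsequent running-time analysis exploits ($\min\{t'_3(j) : j\in[n]\} = \min\{c_{ij} : c_{ij}\ge t\}$ with the $c_{ij}$ presorted). Your formula $c_{ij} + \lim_{p\to\infty} s_{jp}$ is instead the first moment the customer would contribute to \emph{some tangent} of $j$, a strictly later time whenever $\lim_{p\to\infty} s_{jp} > 0$, so it does not compute the time of event 3 as defined. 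Worse, $\lim_{p\to\infty} s_{jp}$ is the limit of $\phi'_j$ at infinity and cannot in general be obtained from finitely many calls to the oracle, which only returns $\phi_j$ and $\phi'_j$ at queried points, so your claimed polynomial-time computation for this case is unsubstantiated. One could try to redesign the bookkeeping so that a customer is only added to $j$'s computation when it first touches a tangent (this would be internally consistent, since before that moment it contributes nothing to any tangent of $j$), but that proves a modified scheme rather than the lemma as stated, and it still hinges on the uncomputable limit; the paper's conservative threshold $c_{ij}$ sidesteps both problems.
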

\begin{proof}
If $e=1$, we use Lemma \ref{lm:loc:multi-sim} to compute $t'$. The lemma's
assumptions can be satisfied as follows. Since no events occur at other
facilities until time $t'$, we can assume that $j$ is the only facility.
Since the set of customers contributing to facility $j$ will not change
until time $t'$, we can satisfy the assumption that $c_{ij}=0$ by
subtracting $c_{ij}$ from each $v_i$ having $v_i \ge c_{ij}$. Since an
unconnected customer will not begin contributing to an open tangent until
time $t'$, we can assume that there are no open tangents. We can satisfy the
assumption that $t=0$ at the beginning of step \eqref{alg:flpd:comp_t} by
adding $td_i$ to each $v_i$.

If $e=2$, we compute $t'$ by iterating over all unconnected customers and
open tangents of facility $j$. If $e=3$, we compute $t'$ by iterating over
all unconnected customers.
\end{proof}
When other events occur between the beginning of step
\eqref{alg:flpd:comp_t} and time $t'$, the computation in this lemma may be
incorrect, however we can still perform it. Let $t'_e(j)$ be the time
computed in this manner for a given $e$ and $j$, and let $t^* = \min\{
t'_e(j) : e\in [3], j\in [n]\}$ and $(e^*, j^*) = \argmin\{ t'_e(j) : e\in
[3], j\in [n]\}$.
\begin{lemma}
\label{lm:loc:min_equiv}
Event $e^*$ at facility $j^*$ is the first to occur after the beginning of
step \eqref{alg:flpd:comp_t}. This event occurs at time $t^*$.
\end{lemma}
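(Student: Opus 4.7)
The plan is to show that $t^* = t^{**}$, where $t^{**}$ denotes the actual time at which the first event occurs after the beginning of step (\ref{alg:flpd:comp_t}). Let $(\bar{e}, \bar{j})$ denote an event that occurs first (chosen arbitrarily in case of ties). Both directions rely on the crucial observation that the computation in Lemma~\ref{lm:loc:min_comp} accurately predicts the time of event $e$ at facility $j$ provided no other events interfere with the evolution of dual variables in the relevant time window; the value $t'_e(j)$ is computed under exactly this non-interference hypothesis, so whenever the hypothesis happens to hold on $[0, t'_e(j)]$, the computed time is correct.

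For the upper bound $t^* \le t^{**}$, I would observe that since $(\bar{e}, \bar{j})$ is a first event, no other events occur before $t^{**}$, so Lemma~\ref{lm:loc:min_comp} applies to yield $t'_{\bar{e}}(\bar{j}) = t^{**}$. Hence $t^* = \min_{e,j} t'_e(j) \le t'_{\bar{e}}(\bar{j}) = t^{**}$. For the lower bound $t^* \ge t^{**}$, I would argue by contradiction: suppose $t^* < t^{**}$. Then no event has occurred in the interval from the start of step (\ref{alg:flpd:comp_t}) up to time $t^*$, so the dual variables evolve on this window exactly as was assumed in the (a priori speculative) computation of $t'_{e^*}(j^*) = t^*$. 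Under these circumstances the computation is in fact faithful to the algorithm's execution, so event $e^*$ at facility $j^*$ does occur at time $t^*$, contradicting the fact that the first event occurs at the strictly later time $t^{**}$. Combining the two bounds gives $t^* = t^{**}$, and the same non-interference argument then guarantees that event $e^*$ at facility $j^*$ truly occurs at this time, certifying it as a valid first event.

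The main obstacle is articulating why the computed values $t'_e(j)$, defined without regard to possible interference from events at other facilities, nonetheless produce the correct overall minimum. The resolution is that interference only becomes an issue after some event has actually occurred; for both the true first-event pair $(\bar{e}, \bar{j})$ and the minimizing pair $(e^*, j^*)$, the relevant time windows lie entirely before any event occurs, so Lemma~\ref{lm:loc:min_comp} applies in both cases and pins the two computations to the same actual time $t^{**} = t^*$.
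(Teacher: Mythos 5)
Your argument is correct, but it is organized somewhat differently from the paper's proof. The paper argues by contradiction about an arbitrary event occurring at a time $t'<t^*$, and this forces a case analysis on the event type: for events 2 and 3 the computed value $t'_e(j)$ is an unconditional lower bound on any actual occurrence time, whereas for event 1 it is a valid bound only if no new customer begins contributing to that facility beforehand, so the paper adds a sub-case that extracts an earlier type-2/3 event whenever the contributor set does change. You avoid this case analysis entirely by invoking the computation only in situations of total non-interference: once for the true first event $(\bar e,\bar j)$, where Lemma~\ref{lm:loc:min_comp} applies verbatim and gives $t^*\le t^{**}$, and once for the minimizer $(e^*,j^*)$ under the contradiction hypothesis $t^*<t^{**}$, where no event at all occurs before $t^*$. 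The price is that the second step uses a converse-style reading of Lemma~\ref{lm:loc:min_comp} --- if no event occurs before the computed time $t'_{e^*}(j^*)$, then event $e^*$ really does occur at facility $j^*$ at that time --- which is not literally the statement of that lemma (its hypothesis is that the event is the first to occur); to be airtight you should justify it by repeating the reasoning of that lemma's proof: absence of any event before $t^*$ means in particular no event 3 at $j^*$, so the contributor set fed into Lemma~\ref{lm:loc:multi-sim} is the correct one throughout the interval, while for $e^*\in\{2,3\}$ the computation is exact regardless because the budgets of unconnected customers grow deterministically during the hypothetical continuation of step \eqref{alg:flpd:comp_t}. This faithfulness fact is the same one the paper needs (implicitly) to conclude that the event indeed occurs at time $t^*$ --- a point your write-up makes more explicit than the paper does --- and what your route gives up is only the paper's unconditional one-sided bounds for events 2 and 3, which your argument never needs.
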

\begin{proof}
Suppose that an event $e'$ at a facility $j'$ occurs at a time $t' < t^*$.
If $e' \in \{2,3\}$, then $t' \ge t'_{e'}(j') \ge t^*$. This is a
contradiction, and therefore this case cannot occur.

If $e'=1$, then we consider two cases. If there is an event $e''\in \{2,3\}$
that occurs at a facility $j''$ at a time $t''<t'$, then we use $t'' \ge
t'_{e''}(j'') \ge t^*$ to obtain a contradiction. If there is no such event
$e''$, then no new customer begins contributing to facility $j'$ between the
beginning of step \eqref{alg:flpd:comp_t} and time $t'$. Therefore, $t' \ge
t'_{e'}(j')\ge t^*$, and we again obtain a contradiction.
\end{proof}
Once we have computed $t^*$, $e^*$, and $j^*$, we finish executing step
\eqref{alg:flpd:comp_t} as follows. If $e^*=3$, that is if the first event
to occur is an unconnected customer beginning to contribute to $j^*$, we
update the list of customers contributing to $j^*$ and recompute $t^*$,
$e^*$, and $j^*$. Since there are $n$ facilities and at most $m$ unconnected
customers, event 3 can occur at most $mn$ times before event 1 or 2 takes
place.

Once event 1 or 2 takes place, step \eqref{alg:flpd:comp_t} is complete, and
we have to execute step \eqref{alg:flpd:open} or \eqref{alg:flpd:conn}. It
is easy to see that these steps can be executed in polynomial time.
Therefore, an additional iteration of loop \eqref{alg:flpd:while} can be
executed in polynomial time. By induction, each of the first $m$ iterations
of loop \eqref{alg:flpd:while} can be executed in polynomial time. At each
iteration, an unconnected customer is connected, either in step
\eqref{alg:flpd:open} or \eqref{alg:flpd:conn}. Therefore, loop
\eqref{alg:flpd:while} iterates at most $m$ times. Obviously, step
\eqref{alg:flpd:ret} can be executed in polynomial time, and therefore
Algorithm \textsc{FLPD} can be executed implicitly in polynomial time.
Recall that we called the algorithm obtained by executing \textsc{FLPD}
implicitly on infinitely-sized problem \eqref{ip:loc:pwl-inf}
\textsc{ConcaveFLPD}.
\begin{theorem}
Algorithm \textsc{ConcaveFLPD} is a 1.61-approximation algorithm for
concave cost facility location, with a running time of $O(m^3n + mn\log n)$.
\end{theorem}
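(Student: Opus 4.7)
The argument splits into two parts: correctness (the 1.61 approximation guarantee) and the running-time bound $O(m^3n + mn\log n)$.

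For correctness, the strategy is to reduce the claim to Theorem \ref{th:flpd:161} via the infinite integer program \eqref{ip:loc:pwl-inf}. First I would verify that \eqref{ip:loc:pwl-inf} and the concave cost program \eqref{mp:loc:conc} have the same optimal value: any feasible $(x,y)$ for \eqref{mp:loc:conc} extends to an equal-cost IP solution by opening, for each facility $j$, the single tangent at $p_j = \sum_i d_i x_{ij}$; conversely, any feasible solution to \eqref{ip:loc:pwl-inf} has cost at least the corresponding concave cost, since each tangent lies on or above $\phi_j$. Next, by induction on the iterations of loop \eqref{alg:flpd:while}, I would argue that \textsc{ConcaveFLPD} is faithful to \textsc{FLPD} running on \eqref{ip:loc:pwl-inf}: at each iteration, Lemma \ref{lm:loc:min_equiv} identifies the correct next event and Lemma \ref{lm:loc:min_comp} computes the same time that \textsc{FLPD} would compute with explicit access to all tangents. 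Combining Theorem \ref{th:flpd:161} with weak duality between \eqref{ip:loc:pwl-inf} and \eqref{lp:loc:pwl-inf-d}, the cost of the produced primal integral solution is at most $1.61\sum_i v_i$, which is at most $1.61$ times the optimum of \eqref{ip:loc:pwl-inf}, and hence of \eqref{mp:loc:conc}.

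For the running time, loop \eqref{alg:flpd:while} executes at most $m$ times, since each iteration either opens a tangent and connects every customer contributing to it, or directly connects one unconnected customer. Between consecutive events, the dominant work is computing $t'_e(j)$ for every facility $j$ and every event type $e \in \{1,2,3\}$. By Lemma \ref{lm:loc:multi-sim} the $e=1$ computation costs $O(m^2)$ per facility, hence $O(m^2n)$ in aggregate; events 2 and 3 cost at most $O(m)$ per facility and are dominated. Across the whole algorithm, event 3 occurs at most $mn$ times, since each (customer, facility) pair triggers it at most once. The plan is to recompute $t'_e(j)$ only at the affected facility $j^*$ following an event 3, keeping cached values of $t'_e(j)$ for the other facilities in a priority queue keyed by time, so that taking the global minimum $t^*$ is cheap. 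The $mn\log n$ term then arises from maintaining this priority queue across the $O(mn)$ event-3 updates (and initial insertions for the $n$ facilities).

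The main obstacle will be the running-time accounting: a naive bound that recomputes all facilities after every event gives $O(m^3n^2)$, which is too large. To reach $O(m^3n+mn\log n)$, I need to observe that (i) an event 3 at facility $j^*$ changes only the contribution structure at $j^*$, so only $t'_e(j^*)$ must be recomputed, at cost $O(m^2)$; (ii) at the start of each of the $m$ while-loop iterations, a full recomputation over all $n$ facilities costs $O(m^2n)$, for an aggregate $O(m^3n)$; (iii) the $O(mn)$ event-3 updates together contribute $O(m^3n)$, absorbed into the same term; and (iv) priority-queue and sort-maintenance operations contribute $O(mn\log n)$. Summing gives the claimed bound.
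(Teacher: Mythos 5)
Your proposal is correct and takes essentially the same route as the paper: correctness via faithful implicit execution of \textsc{FLPD} on the infinite program \eqref{ip:loc:pwl-inf} (Lemmas \ref{lm:loc:min_comp} and \ref{lm:loc:min_equiv}) combined with Theorem \ref{th:flpd:161} and weak duality, and the same running-time decomposition ($m$ while-iterations at $O(m^2n)$ each for the event-1 computations, with at most $mn$ event-3 occurrences handled by recomputing only at $j^*$). The only divergence is bookkeeping: you keep the minima in a priority queue (yielding the $mn\log n$ term) and accept $O(m^2)$ per event-3 recomputation, whereas the paper pre-sorts the $c_{ij}$, updates $\min_j t'_1(j)$ in $O(1)$ using the fact that $t'_1(j^*)$ does not increase, and recomputes $t'_1(j^*)$ incrementally in $O(m)$ --- both accountings land within $O(m^3n + mn\log n)$.
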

\begin{proof}
At the beginning of the algorithm, we sort the connection costs $c_{ij}$,
which can be done in $O(mn\log(mn))$ time. Next, we bound the time needed
for one iteration of loop \eqref{alg:flpd:while}. Note that since loop
\eqref{alg:flpd:while} iterates at most $m$ times, there are at most $m$
open tangents at any point in the algorithm.

In step \eqref{alg:flpd:comp_t}, we first compute $\min\{ t'_1(j) : j\in
[n]\}$. Computing each $t'_1(j)$ requires $O(m^2)$ per facility, and thus
this part takes $O(m^2 n)$ overall. Next, we compute $\min\{ t'_2(j) : j\in
[n]\}$, using $O(1)$ per customer and open tangent, and thus $O(m^2)$
overall. Finally, we compute $\min\{ t'_3(j) : j\in [n]\}$. Since $\min\{
t'_3(j) : j\in [n] \} = \min\{ c_{ij} : c_{ij} \ge t\}$, we have sorted the
values $c_{ij}$, and $t$ only increases as the algorithm runs, this
operation takes $O(mn)$ over the entire run of the algorithm. Therefore, we
can determine the next event to occur in $O(m^2n)$.

If event 1 or 2 is the next one, step \eqref{alg:flpd:comp_t} is complete.
If event 3 is next, an unconnected customer begins to contribute to facility
$j^*$. In this case, we recompute $t'_1(j^*)$ and $\min\{ t'_1(j) : j\in
[n]\}$. Recomputing $t'_1(j^*)$ can be done in $O(m)$, since we only have to
add one customer to the setting of Lemma \ref{lm:loc:multi-sim}. Recomputing
$\min\{ t'_1(j) : j\in [n]\}$ takes $O(1)$, as $t'_1(j^*)$ does not increase
when an unconnected customer begins contributing to facility $j^*$. Note
that $\min\{ t'_2(j) : j\in [n]\}$ does not change. Next, we recompute
$\min\{ t'_3(j) : j\in [n]\}$, which takes $O(mn)$ over the entire run of
the algorithm. The total time to process event 3 and determine the next
event to occur is $O(m)$. Event 3 occurs at most $mn$ times before event 1
or 2 occurs, and therefore the total time for processing event 3 occurrences
is $O(m^2n)$.

Step \eqref{alg:flpd:open} can be done in $O(m)$, and step
\eqref{alg:flpd:conn} in $O(1)$. Therefore, the time for one iteration of
loop \eqref{alg:flpd:while} is $O(m^2n)$. Since there are at most $m$
iterations of loop \eqref{alg:flpd:while}, the running time of the algorithm
is $O(m^3n + mn\log n)$.

By Theorem \ref{th:flpd:161}, Algorithm \textsc{FLPD} is a
1.61-approximation algorithm for problem \eqref{ip:loc:classic}. The
approximation ratio for problem \eqref{mp:loc:conc} follows directly from
the fact that we execute Algorithm \textsc{FLPD} implicitly on
infinitely-sized problem \eqref{ip:loc:pwl-inf}.
\end{proof}
By a similar application of our technique to the 1.861-approximation
algorithm for classical facility location of Mahdian et al.
\cite{MR2146253}, we obtain a 1.861-approximation algorithm for concave cost
facility location with a running time of $O(m^2n+mn\log n)$.
\section{Concave Cost Lot-Sizing}
\label{sect:lots}
In this section, we apply the technique developed in Section \ref{sect:loc}
to concave cost lot-sizing. The classical lot-sizing problem is defined in
Section \ref{sect:intro:lots}, and can be written as a linear program:
\begin{subequations}
\label{lp:lots:classic}
\begin{align}
\min\ &\sum_{s=1}^n f_s y_s +
\sum_{s=1}^n \sum_{t=s}^n (c_s+h_{st}) d_t x_{st},\\
\text{s.t.}\ &\sum_{s=1}^t x_{st} = 1, &1\le t\le n,\\
&0 \le x_{st} \le y_s, 
&1\le s\le t\le n.
\end{align}
\end{subequations}
Recall that $f_t\in \bbR_+$ and $c_t\in \bbR_+$ are the fixed and per-unit
costs of placing an order at time $t$, and $d_t\in \bbR_+$ is the demand at
time $t$. The per-unit holding cost at time $t$ is $h_t\in \bbR_+$, and for
convenience, we defined $h_{st} = \sum_{i=s}^{t-1} h_i$. Note that we omit
the constraints $y_s \in \{0,1\}$, as there is always an optimal extreme
point solution that satisfies them \cite{MR0525912}.

We now adapt the algorithm of Levi et al. \cite{MR2233997} to work in the
setting of problem (\ref{lp:lots:classic}). Levi et al. derive their
algorithm in a slightly different setting, where the costs $h_{st}$ are not
necessarily the sum of period holding costs $h_t$, but rather satisfy an
additional monotonicity condition.

The dual of problem (\ref{lp:lots:classic}) is given by:
\begin{subequations}
\label{lp:lots:classic-d}
\begin{align}
\max\ &\sum_{t=1}^n v_t, \label{lp:lots:classic-d:obj}\\
\text{s.t.}\ &v_t \le (c_s + h_{st}) d_t + w_{st}, 
&1\le s \le t \le n, \label{lp:lots:classic-d:v}\\
&\sum_{t=s}^n w_{st} \le f_{s}, &1\le s\le n,
\label{lp:lots:classic-d:w}\\
&w_{st}\ge 0, &1\le s\le t\le n.
\end{align}
\end{subequations}
As with facility location, since the variables $w_{st}$ do not appear in the
objective, we assume the invariant $w_{st} = \max\{0, v_t - (c_s + h_{st})
d_t\}$. Note that lot-sizing orders correspond to facilities in the facility
location problem, and lot-sizing demand points correspond to customers in
the facility location problem.

We refer to dual variable $v_t$ as the \emph{budget} of demand point $t$. If
$v_t \ge (c_s + h_{st}) d_t$, we say that demand point $t$
\emph{contributes} to order $s$, and $w_{st}$ is its contribution. The total
contribution received by an order $s$ is $\sum_{t=s}^n w_{st}$. An order $t$
is \emph{tight} if $\sum_{t=s}^n w_{st}=f_s$ and \emph{over-tight} if
$\sum_{t=s}^n w_{st} > f_s$.

The primal complementary slackness constraints are:
\begin{subequations}
\label{eq:lots:slack}
\begin{align}
&x_{st}(v_t - (c_s + h_{st}) d_t - w_{st}) = 0, &1\le s\le t\le n,
\label{eq:lots:slack:x}\\
&y_s\left(\sum_{t=s}^n w_{st} - f_s\right) = 0, &1\le s\le n.
\label{eq:lots:slack:y}
\end{align}
\end{subequations}
Let $(x,y)$ be an integral primal feasible solution, and $(v,w)$ be a dual
feasible solution. Constraint (\ref{eq:lots:slack:x}) says that demand point
$t$ can be served from order $s$ in the primal solution only if $s$ is the
closest to $t$ with respect to the modified costs $c_s + h_{st} + w_{st} /
d_t$. Constraint (\ref{eq:lots:slack:y}) says that order $t$ can be placed
in the primal solution only if it is tight in the dual solution.

The algorithm of Levi et al., as adapted here, starts with dual feasible
solution $(v,w)=0$ and iteratively updates it, while maintaining dual
feasibility and increasing the dual objective. At the same time, guided by
the primal complementary slackness constraints, the algorithm constructs an
integral primal solution. The algorithm concludes when the integral primal
solution becomes feasible. An additional postprocessing step decreases the
cost of the primal solution to the point where it equals that of the dual
solution. At this point, the algorithm has computed an optimal solution to
the lot-sizing problem.

We introduce the notion of a wave, which corresponds to the notion of time
in the primal-dual algorithm for facility location. In the algorithm, we
will denote the wave position by $W$, and it will decrease continuously from
$h_{1n}$ to $0$, and then possibly to a negative value not less than
$-c_1-f_1$. We associate to each step of the algorithm the wave position
when it occurred.
{\addtolength{\algrightmarginwidth}{0.5ex}
\addtolength{\algleftmarginwidth}{0.5ex}
\begin{algorithm}[H]
\small
\algname{Algorithm LSPD}{$n \in \mathbb{Z}_+$; $c,f,d\in \bbR^{n}_+, 
h\in \bbR^{n-1}_+$}
\begin{algtab}
Start with the wave at $W=h_{1n}$ and the dual solution $(v,w) = 0$. All orders
are closed, and all demand points are unserved, i.e. $(x,y)=0$.\\
\alglabel{alg:lspd:while}
\textbf{While} there are unserved demand points:\\
\algbegin \alglabel{alg:lspd:comp_t} 
Decrease $W$ continuously. At the same
time increase $v_t$ and $w_{st}$ for unserved demand points $t$ 
so as to maintain $v_t = \max\{ 0, d_t (h_{1t} - W) \}$ and 
$w_{st} = \max \{ 0, v_t - (c_s + h_{st}) d_t\}$. The wave stops when 
an order becomes tight.\\
\alglabel{alg:lspd:open} 
Open the order $s$ that became tight. For each
unserved demand point $t$ contributing to $s$, serve $t$ from $s$. \\ 
\algend
\alglabel{alg:lspd:for}
\textbf{For} each open order $s$ from $1$ to $n$:\\
\algbegin \alglabel{alg:lspd:perm_open} 
If there is a demand point $t$ that
contributes to $s$ and to another open order $s'$ with $s'<s$, close
$s$. Reassign all demand points previously served from $s$ to
$s'$.\\ \algend
Return $(x,y)$ and $(v,w)$.
\end{algtab}
\end{algorithm}}
\vskip -1em
In case of a tie between order points in step (\ref{alg:flpd:open}), we
break the tie arbitrarily. Depending on the demand points that remain
unserved, another one of the tied orders may open immediately in the next
iteration of loop (\ref{alg:flpd:while}).

The proof of the following theorem is almost identical to that from
\cite{MR2233997}, and therefore for this proof we assume the reader is
familiar with the lot-sizing results from \cite{MR2233997}.
\begin{theorem}
\label{th:lspd}
Algorithm \textsc{LSPD} is an exact algorithm for the classical lot-sizing
problem.
\end{theorem}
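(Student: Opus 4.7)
The plan is to establish Theorem \ref{th:lspd} via LP duality, using the fact that the LP relaxation of \eqref{lp:lots:classic} is integral \cite{MR0525912}. It then suffices to exhibit an integer primal feasible $(x,y)$ and a dual feasible $(v,w)$ whose objective values coincide. The approach is to verify feasibility and the complementary slackness conditions for the output of Algorithm LSPD after the postprocessing loop.

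First I would handle feasibility. Primal feasibility follows from the termination condition of the while loop. Dual feasibility is invariant throughout: the wave halts the moment any order becomes tight, so no partial sum $\sum_{t=s}^n w_{st}$ ever exceeds $f_s$; once order $s$ opens, every unserved demand point contributing to $s$ is served, freezing its budget $v_t$ and its contribution $w_{st}$. Termination is guaranteed because the wave cannot descend below $-c_1-f_1$ without forcing order $1$ to be tight.

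Next I would verify the complementary slackness conditions. Condition \eqref{eq:lots:slack:y} is immediate: orders are opened only when tight, and they remain tight because their contributions come from served demand points with frozen budgets. Condition \eqref{eq:lots:slack:x} holds because every $(s,t)$ with $x_{st}=1$ arises from assigning a contributing demand point---either at the moment $s$ opens, or via the reassignment in step \eqref{alg:lspd:perm_open}---so $v_t\ge (c_s+h_{st})d_t$ and hence $w_{st}=v_t-(c_s+h_{st})d_t$. Dually, one must show that $w_{st}>0$ implies $x_{st}=y_s$; equivalently, no finally-open order $s$ has a contributing demand point served elsewhere. This is precisely what the postprocessing loop enforces: iterating in increasing $s$, whenever such an overlap is witnessed between $s$ and an earlier open $s'<s$, we close $s$ and reassign its demand points to $s'$.

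The hard part will be verifying that the reassignment preserves the contribution structure, i.e., that every demand point $t''$ reassigned to $s'$ actually contributes to $s'$. The key observation is that if $t_0$ is the witness, so $v_{t_0}\ge (c_{s'}+h_{s't_0})d_{t_0}$ and $v_{t_0}\ge (c_s+h_{st_0})d_{t_0}$, then dividing by $d_{t_0}$ and subtracting gives $c_{s'}+h_{s's}\le c_s$, using $h_{s't_0}=h_{s's}+h_{st_0}$. For any other demand point $t''$ previously served from $s$, one then has $v_{t''}\ge (c_s+h_{st''})d_{t''}\ge (c_{s'}+h_{s's}+h_{st''})d_{t''}=(c_{s'}+h_{s't''})d_{t''}$, so $t''$ contributes to $s'$, as required. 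One must also check that the postprocessing terminates (each iteration either closes an order or advances the index) and that no newly arising overlap is created by earlier reassignments. Once complementary slackness is fully established, summing the primal and dual inequalities yields $\sum_s f_s y_s+\sum_{s,t}(c_s+h_{st})d_t x_{st}=\sum_t v_t$, and by LP duality together with integrality, the returned solution is optimal for the classical lot-sizing problem.
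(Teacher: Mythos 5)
Your overall route is the same as the paper's: primal/dual feasibility, the fact that opened orders are tight, the fact that each demand point is served by an order it contributes to and contributes to at most one finally-open order (your complementary-slackness phrasing is equivalent to the paper's direct accounting that primal cost equals $\sum_t v_t$), and then weak duality (the appeal to integrality of the LP is unnecessary, since exhibiting an integral primal solution matching a feasible dual already proves optimality). You also correctly isolate the crux: every demand point reassigned from $s$ to $s'$ in step \eqref{alg:lspd:perm_open} must still contribute to $s'$.

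However, your argument for that crux is invalid. From $v_{t_0}\ge (c_{s'}+h_{s't_0})d_{t_0}$ and $v_{t_0}\ge (c_s+h_{st_0})d_{t_0}$ you cannot ``subtract'' to get $c_{s'}+h_{s's}\le c_s$: both inequalities are lower bounds on the same quantity $v_{t_0}/d_{t_0}$ and say nothing about how the two bounds compare. Worse, $c_{s'}+h_{s's}\le c_s$ is exactly the Wagner--Whitin (non-speculative) condition, which the general lot-sizing problem treated here deliberately does not assume, so the step cannot be repaired by strengthening the hypotheses. The correct argument has to use the wave mechanism rather than a cost inequality: every demand point that was actually served from $s$ had its budget frozen at the common wave position $W_s$ at which $s$ became tight, so $v_{t''}=d_{t''}(h_{1t''}-W_s)$; the witness $t_0$ contributing to $s'$ gives (via its own freeze position, which is at least $W_s$ unless no demand point was served from $s$ at all) $W_s\le h_{1s'}-c_{s'}$, and then $v_{t''}\ge (c_{s'}+h_{s't''})d_{t''}$ for every reassigned $t''$ since $h_{1t''}=h_{1s'}+h_{s't''}$. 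The paper's proof also handles a case you leave untouched: demand points that contribute to $s$ but were frozen by some other order $s''$ becoming tight; one must argue $s''>s$, so that they are still served from an open order after $s$ closes and the second invariant survives. As written, your verification of complementary slackness for the postprocessed solution therefore has a genuine gap at its central step.
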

\begin{proof}
We will show that after we have considered open order $s$, at the end of
step (\ref{alg:lspd:perm_open}), we maintain two invariants. First, each
demand point is contributing to the fixed cost of at most one open order
from the set $\{1, \dots, s\}$. Second, each demand point is assigned to an
open order and contributes to its fixed cost.

The first invariant follows from the definition of the algorithm. Indeed,
if a demand point $t'$ is contributing to $s'$ and $s$ with $s'<s$, then the
algorithm would have closed $s$.

Clearly the second invariant holds at the beginning of loop
(\ref{alg:lspd:for}). It continues to hold after we review order $s$ if we
have not closed $s$. Let us now consider the case when we have closed
$s$. The demand points that have contributed to $s$ can be classified into
two categories. The first category contains the demand points whose dual
variables stopped due to $s$ becoming tight---these demand points were
served from $s$ and are now served from $s'$. Since $t$ contributes to $s'$,
so do these demand points. The second category contains the demand points
whose dual variables stopped due to another order $s''$ becoming tight. The
case $s''<s$ cannot happen, or $s$ would have never opened.  Hence, $s<s''$,
and therefore $s''$ is currently open. Moreover, these demand points are
currently served from $s''$ and are contributing to it.

Therefore, at the end of loop (\ref{alg:lspd:for}), each demand point is
contributing to the fixed cost of at most one open order. Therefore, the
fixed cost of opening orders is fully paid for by the dual
solution. Moreover, each demand point is served from an open order, and
therefore the primal solution is feasible. Since each demand point
contributes to the fixed cost of the order it is served from, the holding
and variable connection cost is also fully paid for by the dual
solution. Since the primal and dual solutions have the same cost, the
algorithm is exact.
\end{proof}
\subsection{Applying the Technique}
\label{sect:lots:techn}
We now proceed to develop an exact primal-dual algorithm for concave cost
lot-sizing. The concave cost lot-sizing problem is defined in Section
\ref{sect:intro:lots}:
\begin{subequations}
\label{mp:lots:conc}
\begin{align}
\min\ &\sum_{s=1}^n \phi_s\left(\sum_{t=s}^n d_t x_{st}\right) +
\sum_{s=1}^n \sum_{t=s}^n h_{st} d_t x_{st},\\
\text{s.t.}\ &\sum_{s=1}^t x_{st} = 1, &1 \le t\le n,\\
&x_{st} \ge 0, &1 \le s \le t \le n.
\end{align}
\end{subequations}
Here, the cost of placing an order at time $t$ is given by a nondecreasing
concave cost function $\phi_t : \bbR_+ \to \bbR_+$. We assume without loss
of generality that $\phi_t(0) = 0$ for all $t$.

The application of our technique to the lot-sizing problem is similar to its
application to the facility location problem in Section \ref{sect:loc}.
First, we reduce concave cost lot-sizing problem (\ref{mp:lots:conc}) to the
following infinitely-sized classical lot-sizing problem.
\begin{subequations}
\label{lp:lots:pwl-inf}
\begin{align}
\min\ &\sum_{s=1}^n \sum_{p\ge 0} f_{sp} y_{sp} +
\sum_{s=1}^n \sum_{t=s}^n \sum_{p\ge 0} (c_{sp}+h_{st}) d_t x_{spt},\\
\text{s.t.}\ &\sum_{s=1}^t \sum_{p\ge 0} x_{spt} = 1, \hskip 10em 1\le t\le n,\\
&0 \le x_{spt} \le y_{sp}, \hskip 8.5em 1\le s\le t\le n, p\ge 0.
\end{align}
\end{subequations}
Again we note that since LP (\ref{lp:lots:pwl-inf}) is infinitely-sized,
strongly duality does not hold automatically for it and its dual. However,
the proof of Algorithm \textsc{LSPD} relies only on weak duality. The fact
that the algorithm produces a primal solution and a dual solution with the
same cost implies that both solutions are optimal and that strong duality
holds.

Following Section \ref{sect:loc}, let \textsc{ConcaveLSPD} be the algorithm
obtained by executing Algorithm \textsc{LSPD} implicitly on infinitely-sized
problem \eqref{lp:lots:pwl-inf}.
\begin{theorem}
Algorithm \textsc{ConcaveLSPD} is an exact algorithm for concave cost
lot-sizing, with a running time of $O(n^2)$.
\end{theorem}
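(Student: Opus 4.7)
The plan is to mirror the development of Section \ref{sect:loc:multi-fac}. For correctness, the infinitely-sized LP \eqref{lp:lots:pwl-inf} is an exact representation of \eqref{mp:lots:conc} via the decomposition $\phi_s(\xi_s) = \min\{f_{sp} + s_{sp}\xi_s : p \ge 0\}$ for $\xi_s > 0$, so feasible solutions correspond at equal objective value. The proof of Theorem \ref{th:lspd} uses only weak duality and the primal complementary slackness constraints \eqref{eq:lots:slack}, both of which carry over verbatim to the infinite-dimensional LP. Since LSPD terminates with a primal-dual feasible pair of equal objective, both are optimal, and the implicit execution \textsc{ConcaveLSPD} produces an optimal solution to \eqref{mp:lots:conc}.

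The more substantive task, and the main obstacle, is establishing the $O(n^2)$ running time. The outer while loop runs $O(n)$ times, since each iteration opens an order. At each iteration we must determine the order and tangent that will next become tight as the wave $W$ decreases. Here the key observation, which drastically simplifies the apparatus of Lemma \ref{lm:loc:multi-sim}, is that at any wave position $W$, every currently unserved demand point $t$ has the \emph{same} effective budget per unit demand at a given order $s$, namely $v_t/d_t - h_{st} = (h_{1t}-W) - h_{st} = h_{1s} - W$, independent of $t$. Consequently, the unserved-prefix index $k$ in the intervals $I_{kl}$ of Section \ref{sect:loc:single-fac} collapses---either no unserved demand point at $s$ contributes, or all currently unserved demand points at $s$ do---so only the served-prefix index $l$ varies, giving $O(n)$ cases per order, each resolvable in $O(1)$ from suitable prefix sums via Lemma \ref{lm:loc:uni-sim}.

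To bring the total bound down to $O(n^2)$ I would amortize across iterations. For each order $s$ not yet opened, maintain the list of its served demand points (in order of service) together with prefix sums of $d_t$ and of the contributions they pay to $s$. When an order $s^*$ opens at wave position $W^*$ and moves a batch $T^*$ of demand points from unserved to served, those demand points all share the key $W^*$, so each surviving order's list can absorb them in $O(|T^*|)$ time. Since $\sum_i |T^*_i| \le n$ across all iterations, the total update cost is $O(n^2)$. Combined with $O(n)$ per iteration to locate the globally earliest next event and $O(n^2)$ for the postprocessing loop starting at step (\ref{alg:lspd:for}), the total running time is $O(n^2)$. The subtle part of the argument is verifying that these incremental updates correctly realize the Lemma \ref{lm:loc:multi-sim} analysis for every remaining order under all evolving combinations of served and unserved demand points.
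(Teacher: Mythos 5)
Your exactness argument is fine and is essentially the paper's: the infinite tangent decomposition represents \eqref{mp:lots:conc} exactly, the proof of Theorem \ref{th:lspd} uses only weak duality and complementary slackness, and since the implicitly executed \textsc{LSPD} terminates with a primal and a dual solution of equal cost, both are optimal. Your structural observation that every unserved demand point already reached by the wave has the same effective per-unit budget at any order $s$ (namely $v_t/d_t - h_{st} = h_{1s}-W$), so that the double index of Lemma \ref{lm:loc:multi-sim} collapses and one invocation costs only $O(n)$, is also the same fact the paper invokes (``all dual variables that are increasing exceed all dual variables that are stopped'').

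The gap is in the amortization that is supposed to turn this into $O(n^2)$ overall. You assert that when an order opens and a batch $T^*$ of demand points becomes served, every surviving order can update its next-tight wave position in $O(|T^*|)$ time. Updating prefix sums is indeed that cheap, but the candidate tight times are not: after the insertion, \emph{every} candidate set $C(k,l)$ of that order contains $T^*$, so every candidate break point $p'_{kl}=\sum_{i\in C(k,l)}d_i$ shifts and the pair $(f_{jp'},s_{jp'})$ must be re-queried from the oracle at the new point; no monotonicity is offered that would let you update only the argmin, so a single batch can force $\Theta(n)$ work per surviving order and $O(n^3)$ in total. You also leave out the $W_1(t)$ events (the wave reaching a new demand point), each of which enlarges the pool of actively increasing contributors of up to $n$ orders and perturbs all of their candidates in the same way. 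The paper closes exactly this hole with a different device: it computes, in one backwards pass $t=n,\dots,1$, values $W'_2(t)$ defined as the tight time of order $t$ in the truncated instance on periods $t,\dots,n$, deleting the previously computed $W'_2(t+1)$ whenever Lemma \ref{lm:loc:multi-sim} detects an over-tight tangent and recomputing as if period $t+1$ were absent; since each value is inserted and deleted at most once, only $O(n)$ lemma invocations occur, each costing $O(n)$, which together with the $O(n^2)$ postprocessing loop gives the stated bound. Your event-driven scheme would need an argument of comparable strength (or this backwards-sweep argument itself) to justify $O(n^2)$; as written, the claimed $O(|T^*|)$ update—which you yourself flag as unverified—is the missing step.
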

\begin{proof}
We consider the following events that may occur as step
\eqref{alg:lspd:comp_t} of Algorithm \textsc{LSPD} is executed:
\begin{center}
\small
\begin{tabular}{cll}
Time & & Event \\
\cline{1-1} \cline{3-3}
\rule{0pt}{2.5ex}$W_1(t)$ & & The wave reaches demand point $t$, 
i.e. $W=h_{1t}$. \\
$W_2(t)$ & & A tangent $p$ of order point $t$ becomes tight. \\
\end{tabular}
\end{center}
If for an order point $t$, no tangents become tight in the course of the
algorithm, we let $W_2(t) = +\infty$. The wave positions $W_1(t)$ can be
computed for all $t$ at the beginning of the algorithm in $O(n)$. 

We compute the positions $W_2(t)$ by employing a set of intermediate values
$W'_2(t)$. Each value $W'_2(t)$ is defined as the time when a tangent of
order point $t$ becomes tight in a truncated problem consisting of time
periods $t, t+1, \dots, n$. We compute a subset of these values as follows.
First, we compute $W'_2(n)$, which requires $O(1)$ time by Lemma
\ref{lm:loc:uni-sim}. To compute $W'_2(t)$ given that $W'_2(t+1), \dots,
W'_2(n)$ are computed, we can employ Lemma \ref{lm:loc:multi-sim}.

The dual variables representing demand points $t, \dots, n$ can be divided
into three consecutive intervals. First are the dual variables that are
increasing at the same rate as part of the wave, then the dual variables
$v_k$ that are not increasing but exceed $h_{tk}$, and finally the dual
variables $v_k$ that are not increasing, do not exceed $h_{tk}$, and
therefore play no role in this computation. We employ Lemma
\ref{lm:loc:multi-sim} and distinguish two cases:
\begin{enumerate}
\item
Lemma \ref{lm:loc:multi-sim} can be used to detect if a tangent is
overtight. This indicates that $W'_2(t)$ is an earlier wave position than
$W'_2(t+1), \dots, W'_2(k)$ for some $k$. In this case, we delete
$W'_2(t+1)$ from our subset and repeat the computation of $W'_2(t)$ as if
order point $t+1$ does not exist.
\item
There are no overtight tangents. Thus, a tangent becomes tight at a wave
position less than or equal to $W'_2(t+1)$. In this case we set $W'_2(t)$ to
this wave position, and proceed to the computation of $W'_2(t-1)$.
\end{enumerate}
After computing $W'_2(t)$, consider the values that remain in our subset and
denote them by $W'_2(t), W'_2(\pi(1)), \dots, W'_2(\pi(k))$ for some $k$. By
induction, these values yield the correct times when tangents become tight
for the truncated problem consisting of time periods $t, \dots, n$. After we
have computed $W'_2(1)$, the values $W'_2(t)$ remaining in our subset yield
the correct times $W_2(t)$, with the other values $W_2(t)=+\infty$.
Therefore, loop (\ref{alg:lspd:while}) is complete.

A computation by Lemma \ref{lm:loc:multi-sim} requires $O(n^2)$ time in the
worst case. Since in this setting, all dual variables that are increasing
exceed all dual variables that are stopped, each $W'_2(t)$ can be computed
by Lemma \ref{lm:loc:multi-sim} in $O(n)$. Each time we use Lemma
\ref{lm:loc:multi-sim} for a computation, a value $W'_2(t)$ is either
removed from the list or inserted into the list. Since each value is
inserted into the list only once, the total number of computations is
$O(n)$, and the total running time for loop (\ref{alg:lspd:while}) is
$O(n^2)$.

At the beginning of step (\ref{alg:lspd:for}), there are at most $n$ open
tangents, and $n$ demand points, and therefore this loop can be implemented
in $O(n^2)$ as well.
\end{proof}
Note that the values $W_2(t)$ also yield a dual optimal solution to the
infinitely-sized LP. The solution can be computed from the $W_2(t)$-s in
time $O(n)$ by taking $v_t = h_{1t} -  W_2(\sigma(t))$, where
$\sigma(t)$ is the latest time period less than or equal to $t$ that has
$W_2(\sigma(t)) < +\infty$.
\section{Concave Cost Joint Replenishment}
\label{sect:jrp}
In this section, we apply our technique to the concave cost joint
replenishment problem (JRP). The classical JRP is defined in Section
\ref{sect:intro:jrp}, and can be formulated as an integer program:
\begin{subequations}
\label{ip:jrp:classic}
\begin{align}
\min\ &\sum_{s=1}^n f^0 y^0_{s} + \sum_{s=1}^n \sum_{k=1}^K \ f^k y^k_s +
\sum_{s=1}^n \sum_{t=s}^n \sum_{k=1}^K h^k_{st} d^k_t x^k_{st},\\
\text{s.t.}\ &\sum_{s=1}^t x^k_{st} = 1, \hskip 10em 
1 \le t\le n, k\in [K],\\
&0 \le x^k_{st} \le y^0_{s}, \hskip 8.75em 1 \le s \le t \le n, 
k\in [K], \label{ip:jrp:classic:xy0}\\
&0 \le x^k_{st} \le y^k_{s}, \hskip 8.75em 1 \le s \le t \le n, 
k\in [K], \label{ip:jrp:classic:xyk}\\
&y^0_{s}\in \{0,1\}, y^k_{s}\in \{0,1\}, \hskip 5.25em 1\le s\le n, 
k\in [K]. \label{ip:jrp:classic:y01}
\end{align}
\end{subequations}
Recall that $f^0\in \bbR_+$ is the fixed joint ordering cost, $f^k\in
\bbR_+$ is the fixed individual ordering cost for item $k$, and $d^k_t\in
\bbR_+$ is the demand for item $k$ at time $t$. The per-unit holding cost
for item $k$ at time $t$ is $h^k_t$; for convenience we defined $h^k_{st} =
\sum_{i=s}^{t-1} h^k_i$.

The concave cost JRP, also defined in Section \ref{sect:intro:jrp}, can be
written as a mathematical program as follows:
\begin{subequations}
\label{mp:jrp:conc}
\begin{align}
\min\ &\sum_{s=1}^n \phi^0\left(\sum_{t=s}^n 
\sum_{k=1}^K d^k_t x^k_{st}\right) + \sum_{s=1}^n \sum_{k=1}^K \phi^k 
\left( \sum_{t=s}^n d^k_t x^k_{st} \right) + 
\sum_{s=1}^n \sum_{t=s}^n \sum_{k=1}^K h^k_{st} d^k_t x^k_{st},\\
\text{s.t.}\ &\sum_{s=1}^t x^k_{st} = 1, \hskip 12em 1 \le t\le n, 
k\in [K],\\
&x^k_{st} \ge 0, \hskip 13em 1 \le s \le t \le n, k\in [K].
\end{align}
\end{subequations}
Here the individual ordering cost for item $k$ at time $t$ is given by a
nondecreasing concave function $\phi^k : \bbR_+ \to \bbR_+$. We assume
without loss of generality that $\phi^k(0) = 0$ for all $k$. The joint
ordering cost at time $t$ is given by the function $\phi^0 : \bbR_+ \to
\bbR_+$. To reflect the fact that only the individual ordering costs are
general concave, $\phi^0$ has the form $\phi^0(0) = 0$ and $\phi^0(\xi) =
f^0$ for $\xi>0$.

Consider the case when the individual ordering cost functions $\phi^k$ are
piecewise linear with $P$ pieces:
\begin{equation}
\label{eq:jrp:pwl-def}
\phi^k(\xi^k_t) = 
\begin{cases}
\min \{  f^k_p + c^k_p \xi^k_t : p\in [P] \}, &\xi^k_t > 0,\\
0, &\xi^k_t = 0,
\end{cases}
\end{equation}
Unlike with concave cost facility location and concave cost lot-sizing, the
piecewise-linear concave cost JRP does not reduce polynomially to the
classical JRP. Since there are multiple items, different pieces of the
individual ordering cost functions $\phi^k$ may be employed by different
items $k$ as part of the same order at time $t$. When each cost function
consists of $P$ pieces, we would need $P^K$ time periods to represent each
possible combination, thereby leading to an exponentially-sized IP
formulation.

We could devise a polynomially-sized IP formulation for the piecewise-linear
concave cost JRP, however such a formulation would have a different
structure from the classical JRP, and would not enable us to apply our
technique together with the primal-dual algorithm of Levi et al.
\cite{MR2233997} for the classical JRP. Instead, we reduce the
piecewise-linear concave cost JRP to the following exponentially-sized
integer programming formulation, which we call the \emph{generalized joint
  replenishment} problem. Let $\pi=(p_1, \dots, p_K)$, and let $[P]^K = \{
(p_1, \dots, p_K) : p_i\in [P]\}$.
\begin{subequations}
\label{ip:jrp:gener}
\begin{align}
\min\ &\sum_{\substack{ s\in [n]\\ \pi \in [P]^{K}}} f^0 y^0_{s\pi} +
\sum_{\substack{s\in [n], k\in [K]\\ \pi \in [P]^{K}}}\ 
f^k_{p_k} y^k_{s\pi} +
\sum_{\substack{1\le s\le t\le n\\ k\in [K], \pi\in [P]^{K}}}
(c^k_{p_k} + h^k_{st}) d^k_t x^k_{s\pi t},\\
\text{s.t.}\ &\sum_{\substack{s\in [t]\\ \pi \in [P]^{K}}}
x^k_{s\pi t} = 1, \hskip 12.5em 1 \le t\le n, k\in [K],\\
&0 \le x^k_{s\pi t} \le y^0_{s\pi}, \hskip 10em 1 \le s \le t \le n,
k\in [K], \pi\in [P]^{K}, \label{ip:jrp:gener:xy0}\\
&0 \le x^k_{s\pi t} \le y^k_{s\pi}, \hskip 10em 1 \le s \le t \le n, 
k\in [K], \pi\in [P]^{K}, \label{ip:jrp:gener:xyk}\\
&y^0_{s\pi}\in \{0,1\}, y^k_{s\pi}\in \{0,1\}, \hskip 6.75em 1\le s\le n, 
k\in [K], \pi\in [P]^{K}. \label{ip:jrp:gener:y01}
\end{align}
\end{subequations}
The intuition behind the generalized JRP is that each time period $t$ in the
piecewise-linear concave cost JRP corresponds to $P^K$ time periods
$(t,\pi)$ in the generalized JRP. Each time period $(t,\pi)$ allows us to
use a different combination $\pi=(p_1, \dots, p_K)$ of pieces of the
individual order cost functions $\phi^1, \dots, \phi^K$.

This formulation does not satisfy the cost assumptions required for the
2-approximation algorithm of Levi et al. \cite{MR2233997}. In the next
section, we will devise, starting from the algorithm of Levi et al, an
algorithm for the generalized JRP that provides a 4-approximation guarantee
and runs in exponential time. In Section \ref{sect:jrp:techn}, we will
employ our technique to obtain a strongly polynomial 4-approximation
algorithm for the concave cost JRP.
\subsection{An Algorithm for the Generalized JRP}
\label{sect:jrp:gener}
Consider the LP relaxation of IP (\ref{ip:jrp:gener}) obtained by replacing
the constraints $y^0_{s\pi}\in \{0,1\},\linebreak[0] y^k_{s\pi}\in \{0,1\}$
with $y^0_{s\pi}\ge 0, y^k_{s\pi}\ge 0$. The dual of this LP relaxation is:
\begin{subequations}
\label{lp:jrp:gener-d}
\begin{align}
\max\ &\sum_{k=1}^K \sum_{t=1}^n v^k_t, \label{lp:jrp:gener-d:obj}\\
\text{s.t.}\ &v^k_t \le (c^k_{p_k} + h^k_{st}) d^k_t + 
w^k_{s\pi t} + u^k_{s\pi t}, 
&\substack{1\le s \le t \le n, k\in [K],\\ \pi\in [P]^{K},}
\label{lp:jrp:gener-d:v}\\
&\sum_{t=s}^n w^k_{s\pi t} \le f^k_{p_k}, 
&\substack{1\le s\le n, k\in [K],\\ \pi\in [P]^{K},}
\label{lp:jrp:gener-d:w}\\
&\sum_{k=1}^K \sum_{t=s}^n u^k_{s\pi t} \le f^0, 
&1\le s\le n, \pi\in [P]^{K}, \label{lp:jrp:gener-d:u}\\
&w^k_{s\pi t}\ge 0, u^k_{s\pi t}\ge 0, 
&\substack{1\le s\le t\le n, k\in [K],\\ \pi\in [P]^{K}.}
\label{lp:jrp:gener-d:ge0}
\end{align}
\end{subequations}
Since now both $w^k_{s\pi t}$ and $u^k_{s\pi t}$ are not present in the
objective, the invariants for them become more involved. When $\sum_{t=s}^n
\max\{0, v^k_t - (c^k_{p_k} + h^k_{st}) d^k_t \} \le f^k_{p_k}$, we let as
before
\begin{subequations}
\label{eq:p-d:jrp-invar}
\begin{equation}
w^k_{s\pi t} = \max\{0, v^k_t - (c^k_{p_k} + h^k_{st}) d^k_t \} \le
f^k_{p_k}.
\end{equation}
When $\sum_{t=s}^n \max\{0, v^k_t - (c^k_{p_k} + h^k_{st}) d^k_t \} >
f^k_{p_k}$, the algorithm will have fixed the values $w^k_{s\pi t}$ at the
point when $\sum_{t=s}^n \max\{0, v^k_t - (c^k_{p_k} + h^k_{st}) d^k_t \} =
f^k_{p_k}$. In this situation, we let
\begin{equation}
u^k_{s\pi t} = \max\{ 0, v^k_t - (c^k_{p_k} + h^k_{st}) d^k_t -
w^k_{s\pi t} \}.
\end{equation}
\end{subequations}

We now have demand points for every time-item pair, and we refer to $v^k_t$
as the \emph{budget} of item $k$ at time $t$. Given $\pi$, if $v^k_t \ge
(c^k_{p_k} + h^k_{st}) d_t$, we say that demand point $(k,t)$
\emph{contributes} to the fixed cost of individual order $(s,k,\pi)$ and
$w^k_{s\pi t}$ is its contribution. If $v^k_t \ge (c^k_{p_k} + h^k_{st})
d_t$ and $\sum_{t=s}^n w^k_{s\pi t} = f^k_{p_k}$, we say that demand point
$(k,t)$ contributes to the fixed cost of joint order $(s,\pi)$ and
$u^k_{s\pi t}$ is its contribution.

Since we now have several items, each with its own holding costs, we think
of $W$ as a ``master'' wave, and decrease it from $n$ to $1$ and then to a
bounded amount below $1$. For each item $k$, we maintain an item wave
\begin{equation}
\label{eq:jrp:wave-k}
W^k = h_{1\lfloor W\rfloor} + h_{\lfloor W\rfloor} (W - \lfloor
W \rfloor).
\end{equation}
Intuitively, the $W^k$ are computed so that the item waves arrive together
at time periods $1, \dots, n-1$ and advance linearly inbetween.
{
\begin{algorithm}[H]
\small
\algname{Algorithm JRPPD}{$n,K,P\in \mathbb{Z}_+$; 
$f^0\in \bbR_+$; $f,c\in \bbR^{KP}_+$; \\
\phantom{12345678901234567890} $d\in \bbR^{nK}_+$;
$h\in \bbR^{(n-1)K}_+$}
\begin{algtab}
Start with the wave at $W=n$ and the dual solution $(v,w,u) = 0$. 
All orders are closed, and all demand points are unserved, i.e. 
$(x,y)=0$.\\
\alglabel{alg:jrppd:while}
\textbf{While} there are unserved demand points:\\
\algbegin \alglabel{alg:jrppd:comp_t} 
Decrease $W$ continuously and update
$W^k$ according to (\ref{eq:jrp:wave-k}). At the same time, for unserved
demand points $(t,k)$, increase $v^k_t = \max \{ 0, d_t^k (h_{1t} - W^k)$,
and update $w_{s\pi t}$ and $u_{s\pi t}$ so as to maintain
(\ref{eq:p-d:jrp-invar}). The wave stops when a joint or individual order
becomes tight.\\
\alglabel{alg:jrppd:fix-inv} 
If an individual order $(s,k,\pi)$ became
tight, fix the variables $w^k_{s\pi t}$ as described in
(\ref{eq:p-d:jrp-invar}). If the joint order $(s,\pi)$ is also tight, serve
all demand points contributing to $(s,k,\pi)$ from $(s,\pi)$.\\
\alglabel{alg:jrppd:open} 
If a joint order $(s,\pi)$ became tight, open the
joint order and all tight individual orders $(s,\pi,k)$. For each unserved
demand point $(t,k)$ that contributes to joint order $(s,\pi)$, 
serve $(t,k)$ from $(s,\pi)$. \\ \algend
\alglabel{alg:jrppd:for}
\textbf{For} each open joint order $s$ from $1$ to $n$:\\
\algbegin \alglabel{alg:jrppd:perm_open} 
If there is a demand point $(t,k)$
that contributes to $s$ and to another open joint order $s'$ with $s'<s$,
close $s$. \\ \algend
\alglabel{alg:jrppd:move-items-for}
\textbf{For} each item $k$: \\
\algbegin \alglabel{alg:jrppd:move-items-while}
\textbf{While} not all demand points have been processed
in step (\ref{alg:jrppd:move-items-two}): \\
\algbegin \alglabel{alg:jrppd:move-items-one} 
Select the latest such demand
point $(t,k)$. Let $\mathrm{freeze}(t,k)$ be the location of $W^k$ when
$v_t^k$ was stopped, and let $s$ be the earliest open joint order in
$[\mathrm{freeze}(t,k),t]$. \\
\alglabel{alg:jrppd:move-items-two}
Open individual order $(s,k)$. Serve all demand points $(t',k)$ with $s
\le t'\le t$ from $(s,k)$. \\
\algend
\algend
Return $(x,y)$ and $(v,w)$.
\end{algtab}
\end{algorithm}}
\vskip -1em
A direct implementation of this algorithm will have an exponential running
time. It is possible to implement this algorithm to have a polynomial
running time, however we will not do so here. Instead, we only prove that it
provides a 4-approximation guarantee. The proof closely resembles that from
\cite{MR2233997}, and therefore for this proof we assume the reader is fully
familiar with the joint replenishment results from \cite{MR2233997}.
\begin{theorem}
Algorithm \textsc{JRPPD} provides a 4-approximation guarantee for the
generalized JRP.
\end{theorem}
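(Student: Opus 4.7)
The plan is to adapt the primal-dual analysis of Levi et al.~\cite{MR2233997} for the classical JRP, paying particular attention to the two sources of approximation loss: the ``travel back'' in step (\ref{alg:jrppd:perm_open}), which is already present in the classical case, and the item reassignment in step (\ref{alg:jrppd:move-items-for}), which is new to the generalized setting. Together, these account for the factor of 4.

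First, I would establish that the triple $(v, w, u)$ produced by \textsc{JRPPD} remains feasible for the dual LP (\ref{lp:jrp:gener-d}) throughout execution. Constraint (\ref{lp:jrp:gener-d:v}) holds by the invariants (\ref{eq:p-d:jrp-invar}): each $v^k_t$ and the split of its excess into $w$ and $u$ are maintained so that the constraint is never violated. Constraint (\ref{lp:jrp:gener-d:w}) is ensured by step (\ref{alg:jrppd:fix-inv}), which freezes the $w$-values of an individual order as soon as it becomes tight, after which additional growth in $v^k_t$ is absorbed into $u$. Constraint (\ref{lp:jrp:gener-d:u}) holds because the wave stops whenever a joint order becomes tight in step (\ref{alg:jrppd:comp_t}). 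Weak duality for the LP relaxation of (\ref{ip:jrp:gener}) then yields $\sum_k \sum_t v^k_t \le \mathrm{OPT}$.

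Next, I would bound the primal cost in three parts. The holding and per-unit costs for a demand point $(t,k)$ served from an order it contributes to are bounded, via constraint (\ref{lp:jrp:gener-d:v}), by $v^k_t - w^k_{s\pi t} - u^k_{s\pi t}$. The joint fixed cost of an open joint order surviving step (\ref{alg:jrppd:perm_open}) is bounded by twice the $u$-contributions of the demand points eventually served there; the factor $2$ arises exactly as in Levi et al., from the witness demand point used to close dominated joint orders in (\ref{alg:jrppd:perm_open}), which lets us transfer charges from the closed order to its earlier replacement. The individual fixed cost of an order $(s, k)$ opened in step (\ref{alg:jrppd:move-items-two}) is bounded by twice the $w$-contributions of the items served by it; here the second factor of $2$ comes from the fact that items may have contributed their $w$ at a different time period and a different tangent than the one representing $(s, k)$, and concavity of $\phi^k$ together with the tangent representation (\ref{eq:jrp:pwl-def}) allows us to transfer bounds between tangents at a factor-$2$ loss. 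Summing these three bounds and invoking the dual feasibility relationships yields a total primal cost of at most $4 \sum_k \sum_t v^k_t \le 4\,\mathrm{OPT}$.

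The main obstacle will be the individual fixed cost bound for orders created in step (\ref{alg:jrppd:move-items-two}), because these orders may be opened at joint orders to which the items did not originally contribute, and the corresponding tangent $(c^k_{p_k}, f^k_{p_k})$ depends on the aggregated quantity at the new time period rather than where the items contributed their $w$. The latest-first processing order in loop (\ref{alg:jrppd:move-items-while}), together with the choice of the earliest open joint order in $[\mathrm{freeze}(t,k), t]$, should make the accounting work out, but the details must be checked carefully against the possible tangent mismatches and the interaction with the closure rule of step (\ref{alg:jrppd:perm_open}).
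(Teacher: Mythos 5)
Your proposal does not carry the same accounting as the paper, and the place where it is vague is exactly where the real work lies. The paper's split is: (i) after the cleanup loop (\ref{alg:jrppd:for}) every demand point contributes to at most one surviving open joint order, so the joint ordering cost is \emph{fully} paid by the dual, $\sum_{s,\pi} f^0 y^0_{s\pi} \le \sum_{k,t} v^k_t$ --- a factor $1$, not $2$; in Levi et al.\ the witness/closing rule is precisely what buys this factor $1$, it is not a source of a factor $2$ on the joint cost. (ii) Then, per item $k$, the holding plus individual ordering cost incurred after the reassignment loop (\ref{alg:jrppd:move-items-for}) is bounded by $3\sum_t v^k_t$, and $1+3=4$. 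Your scheme instead charges holding at $v^k_t - w - u$, joint at $2\times$ the $u$-contributions, and individual orders at $2\times$ the $w$-contributions; if those three bounds all held they would sum to roughly $\sum v + \sum u + \sum w \le 2\sum_{k,t} v^k_t$, i.e.\ a $2$-approximation, which is a signal that the bookkeeping has not actually been carried out and that the claimed bounds cannot all be right as stated.

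The genuine gap is the individual-order and holding bound after step (\ref{alg:jrppd:move-items-two}). There a demand point $(t,k)$ is generally served from an individual order opened at the \emph{earliest} open joint order in $[\mathrm{freeze}(t,k),t]$, an order to which it need not contribute at all and whose relevant tangent of $\phi^k$ need not be the one that received its $w$-contribution; so your first bound (``served from an order it contributes to'') does not apply to these points, and your assertion that ``concavity plus the tangent representation allows transferring bounds between tangents at a factor-$2$ loss'' is exactly the unproved step --- you acknowledge this yourself by deferring the ``details to be checked.'' Controlling this holding cost via $\mathrm{freeze}(t,k)$ and the tangent mismatch is what forces the paper's per-item factor of $3$ (versus $1$ in the classical JRP), so without that argument the proposal does not establish the $4$-approximation.
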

\begin{proof}
First, similarly to the proof of Levi et al. and Theorem \ref{th:lspd},
after loop (\ref{alg:jrppd:for}), each demand point contributes to at most
one open joint order. Since we do not open any other joint orders after this
step, the joint order cost is fully paid by the dual solution, i.e.
$\sum_{s=1}^n \sum_{\pi \in [P]^{K}} f^0 y^0_{s\pi} \le \sum_{k=1}^K
\sum_{t=1}^n v^k_t$. Out of 4 times the cost of the dual solution, we
allocate one toward the cost of the joint orders. Therefore, we need not
consider the cost of the joint orders further in this proof.

Second, also similarly to the proof of Levi et al. and Theorem \ref{th:lspd},
after loop (\ref{alg:jrppd:for}), for each demand point $(t,k)$ there is at
least one open joint order in $[\mathrm{freeze}(t,k), t]$. Therefore, after
loop (\ref{alg:jrppd:move-items-for}), the algorithm produces a feasible
primal solution.

Since we have already covered the cost of joint orders, we now consider each
item $k$ separately. We bound the holding cost and the cost of individual
orders in terms of the dual value, similarly to Levi et al. Due to the
different cost structure of the JRP and generalized JRP, we are only able to
bound the holding and individual order cost by 3 times the cost of the dual
solution, i.e. $\sum_{s=1}^n \sum_{\pi \in [P]^{K}} f^k_{p_k} y^k_{s\pi} +
\sum_{s=1}^n \sum_{t=s}^n \sum_{\pi\in [P]^{K}} (c^k_{p_k} + h^k_{st}) d^k_t
x^k_{s\pi t} \le 3 \sum_{t=1}^n v^k_t$.

Therefore, we obtain a 4 approximation algorithm.
\end{proof}
\subsection{Applying the Technique}
\label{sect:jrp:techn}
Finally, we obtain the strongly polynomial algorithm for the concave cost
JRP. First, we reduce the concave cost JRP to an infinitely-sized
generalized JRP:
\begin{subequations}
\label{ip:jrpd:gener-inf}
\begin{align}
\min\ &\sum_{\substack{ s\in [n]\\ \pi \in \bbR_+^K}} 
f^0 y^0_{s\pi} + 
\sum_{\substack{s\in [n], k\in [K]\\ \pi \in \bbR_+^K}}\ 
f^k_{p_k} y^k_{s\pi} + 
\sum_{\substack{1\le s\le t\le n\\ k\in [K], \pi\in \bbR_+^K}}
(c^k_{p_k} + h^k_{st}) d^k_t x^k_{s\pi t},\\
\text{s.t.}\ &\sum_{\substack{s\in [t]\\ \pi \in \bbR_+^K}}
x^k_{s\pi t} = 1, \hskip 12.5em 1 \le t\le n, k\in [K],\\
&0 \le x^k_{s\pi t} \le y^0_{s\pi}, \hskip 10em 1 \le s \le t \le n, 
k\in [K], \pi\in \bbR_+^K, \label{ip:jrpd:gener-inf:xy0}\\
&0 \le x^k_{s\pi t} \le y^k_{s\pi}, \hskip 10em 1 \le s \le t \le n, 
k\in [K], \pi\in \bbR_+^K, \label{ip:jrpd:gener-inf:xyk}\\
&y^0_{s\pi}\in \{0,1\}, y^k_{s\pi}\in \{0,1\}, \hskip 6.75em 
1\le s\le n, k\in [K], \pi\in \bbR_+^K. 
\label{ip:jrpd:gener-inf:y01}
\end{align}
\end{subequations}
As before, let \textsc{ConcaveJRPPD} be the algorithm obtained by executing
Algorithm \textsc{JRPPD} implicitly on infinitely-sized problem
\eqref{ip:jrpd:gener-inf}.
\begin{theorem}
Algorithm \textsc{ConcaveJRPPD} is a strongly polynomial 4-approximation
algorithm for the concave cost JRP.
\end{theorem}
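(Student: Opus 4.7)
The plan is to mirror the derivations for \textsc{ConcaveFLPD} and \textsc{ConcaveLSPD}. The approximation guarantee will follow by viewing \eqref{ip:jrpd:gener-inf} as the infinite analogue of \eqref{ip:jrp:gener} under the tangent interpretation \eqref{eq:loc:pwl-inf} of each $\phi^k$: every feasible solution of \eqref{mp:jrp:conc} extends to a feasible solution of \eqref{ip:jrpd:gener-inf} of no larger cost, and weak duality for the LP relaxation of \eqref{ip:jrpd:gener-inf} still holds, so the dual value produced by \textsc{JRPPD} remains a valid lower bound on the optimum of \eqref{mp:jrp:conc}. Executing \textsc{JRPPD} implicitly would produce the same primal assignment and dual vector as an explicit run on \eqref{ip:jrpd:gener-inf}, which by the previous theorem yields a cost within a factor $4$ of that optimum.

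For the running time, the main task is to simulate step \eqref{alg:jrppd:comp_t} without enumerating either the infinite family of tangents or the infinite parameter set for $\pi$. Two kinds of events end the step: an individual order $(s,k,\pi)$ becomes tight, or a joint order $(s,\pi)$ becomes tight. Master-wave integer crossings $W=t$ are auxiliary breakpoints at which the rates of the budgets $v^k_t$ change. For individual-order events I would invoke Lemma \ref{lm:loc:multi-sim} on each pair $(s,k)$, treating the demand points $t\ge s$ as customers whose rates are dictated by the item wave $W^k$, in direct analogy with the analysis of \textsc{ConcaveLSPD}; this returns the earliest firing tangent $p_k^*(s)$ and its wave position.

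The main obstacle is the joint-order event, because joint orders are indexed by an uncountable $\pi\in \bbR_+^K$. The key observation is that $u^k_{s\pi t}$ remains zero until the individual order $(s,k,\pi)$ becomes tight, and the rate at which it grows thereafter is determined solely by the rate of $v^k_t$, independent of the choice of $p_k$. Hence, among all $\pi$ sharing the start time $s$, the combination that accumulates the overflow $\sum_{k,t} u^k_{s\pi t}$ fastest, and so reaches $f^0$ first, is $\pi^*(s)=(p_1^*(s),\dots,p_K^*(s))$: replacing any $p_k^*(s)$ by a later-firing tangent only delays the onset of $u$-accumulation for item $k$ and therefore cannot bring the joint-order tightness time forward. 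This collapses the family of joint-order events to $n$ candidates, one per start time, each recomputable in polynomial time whenever some $p_k^*(s)$ is updated.

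With these event computations in hand, each iteration of loop \eqref{alg:jrppd:while} can be executed in strongly polynomial time and either serves at least one demand point or fires exactly one order-tightness event, bounding the number of iterations polynomially. The cleanup loop \eqref{alg:jrppd:for} and the item-assignment loop \eqref{alg:jrppd:move-items-for} each touch $O(nK)$ objects and are direct. Combining this running-time bound with the approximation guarantee inherited from the previous theorem yields the strongly polynomial $4$-approximation asserted.
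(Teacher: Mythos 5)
Your proposal follows the paper's own route: you inherit the 4-approximation guarantee from the generalized-JRP theorem via implicit execution on the infinite formulation \eqref{ip:jrpd:gener-inf}, and you collapse the uncountable family of joint orders by arguing that only $\pi^*(s)$, assembled from the first-firing tangents $p^*_{ks}$, can determine joint-order tightness---precisely the paper's key observation---before iterating over the same three event types (wave reaching a period, an individual tangent becoming tight, a joint order becoming tight), with the individual events computed via Lemma \ref{lm:loc:multi-sim} as in the lot-sizing case. One small point to tighten: the growth rate of $\sum_{t} u^k_{s\pi t}$ is not literally independent of $p_k$ (the slope $c^k_{p_k}$ affects which demand points contribute), but the paper's assertion that at every wave position the accumulated joint contribution is maximized at $p_k=p^*_{ks}$ supplies the dominance your argument needs.
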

\begin{proof}
Although in this setting all ordering costs are the same over time, we
will need to refer to ordering costs and groups of tangents at specific times.
With this in mind, we will refer to the ordering cost of item $k$ at time $t$
by $(\phi^k,t)$ and to the joint ordering cost at time $t$ by $(\phi^0,t)$.

Note that we do not need to keep track of variables for each $\pi \in
\bbR_+^K$ explicitly. Denote the tangent to the individual ordering cost
$(\phi^k,s)$ that becomes tight first by $p^*_{ks}$. Then all the other
tangents to this individual ordering cost at this time are no longer
relevant:
\begin{enumerate}
\item
Concerning individual ordering costs. For any item $l\ne k$, the behavior of
demand points $(t,l)$ or tangents to costs $(\phi^l,t)$ does not depend on
item $k$, except through the joint ordering cost.
\item
Concerning the joint ordering cost. For any wave position, the contribution
to the joint ordering cost $\sum_{k=1}^K \sum_{t=s}^n u^k_{s\pi t}$ is
highest for $\pi$ with $p_k = p^*_{ks}$.
\end{enumerate}
Therefore, it suffices to keep track, for each item $k$ and time $s$, of the
wave position when the first tangent to $(\phi^k,s)$ becomes tight. When
this occurs, we can stop considering all other tangents to $(\phi^k,s)$.
When computing the wave position when the joint ordering cost becomes tight,
we need to consider only the tangents that became tight for individual
ordering costs $(\phi^k,s)$. Through this transformation, the wave position
when the joint ordering cost becomes tight can be computed by Lemma
\ref{lm:loc:multi-sim}.

We now define the following events and wave positions when they
occurred:
\begin{center}
\small
\begin{tabular}{lll}
Wave Pos. & & Event \\
\cline{1-1} \cline{3-3}
\rule{0pt}{2.5ex}$W_1(t)$ & & The wave reaches time period $t$, 
i.e. $W=t$. \\
$W_2(t,k)$ & & A tangent $p$ of order point $(t,k)$ becomes tight. \\
$W_3(t)$ & & The joint order at time $t$ becomes tight.
\end{tabular}
\end{center}
The computation now proceeds similarly to the lot-sizing case. We compute
the largest of the wave positions $W_1(t)$, $W_2(t,k)$, and $W_3(t)$ (which
corresponds to the smallest time in the facility location problem). After
the computation we update the other $W$-values, and iterate.
\end{proof}
\section*{Acknowledgments}
This research was supported in part by the Air Force Office of Scientific
Research, Amazon.com, and the Singapore-MIT Alliance.
%
%
%
\bibliographystyle{alpha}
\bibliography{strongly}
\end{document}